\theoremstyle{plain} 
\newtheorem{theorem}{Theorem}[section]
\newtheorem{lemma}{Lemma}[section]
\newtheorem{proposition}{Proposition}[section]
\newtheorem{corollary}{Corollary}[section]
\theoremstyle{definition}
\newtheorem{definition}{Definition}[section]
\theoremstyle{remark}
\newtheorem{remark}{Remark}[section]
\DeclarePairedDelimiter{\abs}{\lvert}{\rvert} 
\DeclarePairedDelimiter{\norm}{\lVert}{\rVert}  
\DeclareMathOperator{\divergenza}{div}
\renewcommand{\div}{\divergenza}
\DeclareMathOperator{\sgn}{sgn}
\newcommand{\R}{\mathbb{R}}
\newcommand{\C}{\mathbb{C}}
\newcommand{\normeq}[1]{{\left\vert\kern-0.25ex\left\vert\kern-0.25ex\left\vert #1 
    \right\vert\kern-0.25ex\right\vert\kern-0.25ex\right\vert}}
\newenvironment{system}%
{\left\lbrace\begin{array}{r@{\hspace{1mm}}ll}}%
{\end{array}\right.}
\title{\textbf{Bounds on eigenvalues of perturbed Lamé operators with complex potentials}}
\author{Lucrezia Cossetti}
\affil{Karlsruher Institut f\"ur Technologie, Englerstra{\ss}e 2, 76131 Karlsruhe, Germany; lucrezia.cossetti@kit.edu.}
\begin{document}

\date{}

\maketitle
\vspace{-1cm}


\begin{abstract}
	\noindent
	Several recent papers have focused their attention in proving the correct analogue to the Lieb-Thirring inequalities for non self-adjoint operators and in finding bounds on the distribution of their eigenvalues in the complex plane. This paper provides some improvement in the state of the art in this topic. Precisely, we address the question of finding quantitative bounds on the discrete spectrum of the perturbed Lamé operator of elasticity $-\Delta^\ast + V$ in terms of $L^p$-norms of the potential. Original results within the self-adjoint framework are provided too.
\end{abstract}


\section{Introduction}
This paper is devoted to providing bounds on the location of discrete eigenvalues of operators of the form
\begin{equation*}
	-\Delta^\ast + V
\end{equation*}  
acting on $[L^2(\R^d)]^d,$ the Hilbert space of vector fields with components in $L^2(\R^d)$.

Here $-\Delta^\ast$ denotes the Lamé operator of elasticity, that is a linear, symmetric, differential operator of second order that acts on smooth $L^2$-vector fields $u=(u_1,u_2,\dots,u_d)$ on $\R^d,$ say $[C^\infty_0(\R^d)]^d,$ in this way:
\begin{equation}\label{Lamé_operator}
	-\Delta^\ast u:= -\mu \Delta u 
									 - (\lambda + \mu) \nabla \div u \qquad \text{with}\qquad \Delta u:=(\Delta u_1, \Delta u_2, \dots, \Delta u_d). 
\end{equation}
The two material-dependent constants $\lambda,\mu\in \R$ (tipically called Lamé's coefficients) are assumed to satisfy the standard conditions
\begin{equation}\label{Lamé_parameter}
		\mu>0, \, 
		\lambda +2\mu>0, 
\end{equation}
which guarantee the strong ellipticity of $- \Delta^\ast$ (refer to~\cite{Lucrezia} for a brief summary in matter of ellipticity for systems of second order differential operators). 

$V$ represents the operator of multiplication by a (possibly) \emph{complex-valued} generating function $V\colon \R^d \to \C,$ this leads us to working within a \emph{non self-adjoint} setting.

\medskip
The distribution of eigenvalues of \emph{self-adjoint} operators has been intensively studied for several decades and nowadays the usage of powerful techniques such as spectral theorem and variational principles have become the standard approach for addressing this issue. However, as these tools are no longer available in a non self-adjoint setting, the generalization of spectral bounds to the non self-adjoint framework is not straightforward and requires a diverse strategy.

Nevertheless, a systematic, albeit recent, approach to the study of eigenvalue estimates for perturbed operators with complex-valued potentials, in particular for Schr\"odinger operators $-\Delta + V$, has been successfully developed and has already a bibliography. It is very well known consequence of Sobolev inequalities that if $V$ is \emph{real-valued} then the distance from the origin of every eigenvalue $z$ lying in the negative semi-axis (discrete eigenvalue) can be bounded in terms of $L^p$-norm of the potential (see~\cite{Keller,L_T} and~\cite{C_F_L} for a more recent improvement), more precisely the following bound 
\begin{equation}\label{real-valued-V-bound}
	\abs{z}^\gamma\leq C_{\gamma, d} \norm{V}_{L^{\gamma + \frac{d}{2}}(\R^d)}^{\gamma+ \frac{d}{2}}
\end{equation}
holds true for every $\gamma \geq 1/2$ if $d=1,$ $\gamma>0$ if $d=2$ and $\gamma\geq 0$ if $d\geq 3,$ with $C_{\gamma, d}$ independent of $V.$  

In 2001, Abramov, Aslanyan and Davies~\cite{A_A_D} generalized the previous result to \emph{complex-valued} potentials showing that every discrete eigenvalue $z$ of the \emph{one-dimensional} Schr\"odinger operator $-d^2/dx^2 + V,$ i.e. $z\in \C \setminus [0,\infty),$ lies in the complex plane within a $1/4 \norm{V}_{L^1(\R)}^2$ radius of the origin.  In order to overcome the lack of the aforementioned self-adjoint-based tools, the authors introduced a strategy built on the Birman-Schwinger principle (see, for instance, Birman's paper~\cite{Birman2}), which has permeated all the subsequent works on this topic.
Ten years later Frank~\cite{Frank} went beyond the one dimensional restriction proving the validity of~\eqref{real-valued-V-bound} for any $d\geq 2$ and $0<\gamma\leq 1/2,$ covering also the endpoint case $\gamma=0$ if $d\geq 3.$  His notable accomplishment derives from replacing a pointwise bound for the Green function of $-d^2/dx^2 -z,$ used in~\cite{A_A_D} and highly sensitive of the one dimensional framework, with the much deeper uniform resolvent estimates due to Kenig, Ruiz and Sogge~\cite{K_R_S} (see below for further details).    
The result in~\cite{Frank} partially proved the conjecture in~\cite{La_Sa} according to which~\eqref{real-valued-V-bound} holds for any $d\geq 2$ and $0<\gamma\leq d/2$, leaving opened only the case $1/2<\gamma<d/2.$ This range was covered later by Frank and Simon in~\cite{F_S} for \emph{radial potential}. In the same work the general case was investigated too. In these respects the authors provided the construction of a sequence of real-valued potentials $V_n$ with $\norm{V_n}_p \to 0$ for $d\geq 2$ and for any $p>(d+1)/2$ such that $-\Delta + V_n$ has eigenvalue 1. Even if this does not disprove the conjecture, as it is stated for discrete eigenvalues only, the result is still relevant in view of the recent generalization in~\cite{F_S} of Frank's work~\cite{Frank} to positive (embedded) eigenvalues. 
Recent related results can be also found in~\cite{F}. 

\medskip
In passing, let us stress that the extensive bibliography devoted to spectral bounds in a non self-adjoint context that the appearance of the cited work~\cite{A_A_D} stimulated, did not remain long confined to the class of Schr\"odinger operators. Indeed subsequent investigations have shown the robustness of the approach introduced in~\cite{A_A_D} by fruitfully testing it to several other models. Without attempting to be exhaustive we mention~\cite{Cuenin} in which the issue of eigenvalue bounds was studied in connection with fractional Schr\"odinger operators and Dirac operators with complex-valued potentials (see also~\cite{F_L_S} and~\cite{Cuenin2}). In the other way round, namely within the higher-order partial differential operators setting of problems, goes the recent work~\cite{I_K_L} in which the authors extended bounds in~\cite{A_A_D} to biharmonic operators.

\medskip
The main aim of our paper is to investigate on spectral properties in the elasticity setting, specifically providing bounds on the distribution of discrete (possibly complex) eigenvalues of Lamé operators~\eqref{Lamé_operator} in terms of $L^p$-norm of the potential. In other words we want to show the validity of a suitable analogous of~\eqref{real-valued-V-bound} in this diverse framework.      

A first motivation to our purpose is the trivial observation that in $d=1$ our operator $-\Delta^\ast$ turns into a scalar differential operator and, even more relevant, it is simply a multiple of the Laplacian, more precisely
\begin{equation*}
	-\Delta^\ast:= -\mu \frac{d^2}{dx^2} -(\lambda + \mu) \frac{d^2}{dx^2}= -(\lambda + 2\mu)\frac{d^2}{dx^2}.
\end{equation*}

Therefore, by virtue of the aforementioned analogous theorem in~\cite{A_A_D} (\emph{cfr} Theorem 4) for non self-adjoint Schr\"odinger operators $-d^2/dx^2 + V,$ the following result comes as no surprise and it is indeed its straightforward consequence.
\begin{theorem}\label{thm:Lamé_1d}
	Let $d=1.$ Then any eigenvalue $z\in \C \setminus [0,\infty)$ of the perturbed Lamé operator $-\Delta^\ast + V$ satisfies
	\begin{equation}\label{bound_1d}
		\abs{z}^{1/2}\leq \frac{1}{2 \sqrt{\lambda + 2\mu}} \norm{V}_{L^1(\R)}. 
	\end{equation}
\end{theorem}
The validity of Theorem~\ref{thm:Lamé_1d}, compared with Theorem 4 in~\cite{A_A_D}, together with the aforementioned extensions to higher dimensional Schr\"odinger operators, motivate the question of whether an estimate similar to~\eqref{bound_1d} holds true in $d\geq 2.$ 

As already said, starting from the celebrated paper of Abramov, Aslanyan and Davies~\cite{A_A_D}, the usage of the Birman-Schwinger principle  has been recognized as a crucial tool to get this type of bounds in a non self-adjoint setting of problem. Roughly speaking this permits to re-phrase conveniently the eigenvalues problem associated to a perturbed operator, say $H_0 + V,$ in terms of the eigenvalues problem of an integral operator suitably related to the latter. Precisely, the following proposition holds true.  
\begin{proposition}[Birman-Schwinger principle]\label{Birman-Schwinger}
	Let  $z\notin \sigma(H_0).$ Then 
	\begin{equation*}
		z \in \sigma_p(H_0 + V) \quad \Longleftrightarrow \quad -1 \in \sigma_p(V_{1/2} (H_0-z)^{-1} \abs{V}^{1/2}),
	\end{equation*}
	 with $V_{1/2}:= \abs{V}^{1/2} \sgn(V).$ (If $z\in \C,$ then $\sgn(z)$ represents the complex signum function defined by $\sgn(z):=z/\abs{z}$ if $z\neq 0$ and $\sgn(0):=0$).
\end{proposition}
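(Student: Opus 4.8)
The plan is to prove both implications by the standard factorization trick, exploiting that $\abs{V}^{1/2}$ and $V_{1/2}$ are commuting multiplication operators with $V = \abs{V}^{1/2}V_{1/2} = V_{1/2}\abs{V}^{1/2}$, together with the fact that $(H_0-z)^{-1}$ is a genuine bounded operator on $[L^2(\R^d)]^d$ since $z\notin\sigma(H_0)$. I would work throughout under the standing integrability hypotheses on $V$ fixed in the sequel, which ensure that $V_{1/2}(H_0-z)^{-1}\abs{V}^{1/2}$ extends to a bounded operator and that the formal manipulations below are legitimate.

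For the implication ``$\Rightarrow$'', I would take $\psi\neq 0$ with $(H_0+V)\psi = z\psi$, rewrite this as $(H_0-z)\psi = -V\psi = -\abs{V}^{1/2}V_{1/2}\psi$, invert $(H_0-z)$ to obtain $\psi = -(H_0-z)^{-1}\abs{V}^{1/2}(V_{1/2}\psi)$, and then set $\phi := V_{1/2}\psi$. Applying $V_{1/2}$ to the last identity yields exactly $\phi = -V_{1/2}(H_0-z)^{-1}\abs{V}^{1/2}\phi$, and $\phi\neq 0$ because $V_{1/2}\psi = 0$ would force $\psi = -(H_0-z)^{-1}\abs{V}^{1/2}(V_{1/2}\psi) = 0$. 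Hence $-1\in\sigma_p(V_{1/2}(H_0-z)^{-1}\abs{V}^{1/2})$.

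For the converse ``$\Leftarrow$'', I would start from $\phi\neq 0$ with $V_{1/2}(H_0-z)^{-1}\abs{V}^{1/2}\phi = -\phi$ and set $\psi := (H_0-z)^{-1}\abs{V}^{1/2}\phi$. Then $\psi\neq 0$ (otherwise $\phi = -V_{1/2}\psi = 0$) and $(H_0-z)\psi = \abs{V}^{1/2}\phi$; moreover $V_{1/2}\psi = V_{1/2}(H_0-z)^{-1}\abs{V}^{1/2}\phi = -\phi$, so $\abs{V}^{1/2}\phi = -\abs{V}^{1/2}V_{1/2}\psi = -V\psi$, which gives $(H_0-z)\psi = -V\psi$, i.e. $(H_0+V)\psi = z\psi$ and $z\in\sigma_p(H_0+V)$.

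The algebra is elementary; the part that needs care — and the only real obstacle — is the operator-theoretic bookkeeping. I would need to verify that the vector $\psi$ constructed in the converse direction actually belongs to the domain of $H_0+V$, that $\abs{V}^{1/2}\phi$, $V_{1/2}\psi$ and $V\psi$ are well-defined elements of $[L^2(\R^d)]^d$, and that $V_{1/2}(H_0-z)^{-1}\abs{V}^{1/2}$ is a bona fide bounded (in fact, under the relevant hypotheses on $V$, compact) operator, so that its point spectrum is the right object to work with. I would dispatch all of this beforehand via the $L^p$-assumptions on $V$ and standard relative-boundedness and Sobolev-embedding arguments, and only then invoke the identities above.
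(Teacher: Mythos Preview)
Your argument is the standard factorization proof of the Birman--Schwinger principle and is correct; the algebraic manipulations in both directions are exactly what is needed, and you rightly flag the operator-theoretic bookkeeping (domain questions, boundedness of the Birman--Schwinger operator) as the only place requiring care.

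Note, however, that the paper does \emph{not} supply its own proof of this proposition: it is stated in the introduction as a known tool, with a reference to Birman's work, and is then simply invoked in Section~4. So there is nothing to compare against. Your write-up would serve perfectly well as a self-contained justification, and in fact goes beyond what the paper provides.
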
 

In the context of elasticity, $H_0$ is replaced by the Lamé operator $-\Delta^\ast,$ therefore the corresponding Birman-Schwinger operator has the form $V_{1/2} (-\Delta^\ast-z)^{-1} \abs{V}^{1/2}$ and makes sense if $z \in \C \setminus [0, \infty)$ as $\sigma(-\Delta^\ast)=[0,\infty).$
This leads to the need for an explicit expression of the resolvent operator associated with $-\Delta^\ast.$ 
In this regards it turns out that $(-\Delta^\ast -z)^{-1}$ has a favorable form, \emph{i.e.},
\begin{equation}\label{favorable_form}
	(-\Delta^\ast -z)^{-1} g= \frac{1}{\mu} \big(-\Delta -\tfrac{z}{\mu}\big)^{-1} \mathcal{P} g + \frac{1}{\lambda + 2\mu} \big(-\Delta - \tfrac{z}{\lambda + 2\mu}\big)^{-1} (I- \mathcal{P}) g,
\end{equation}
where $\mathcal{P}$ is the so-called Leray projection operator, customarily used in elasticity to decompose $[L^2(\R^d)]^d$ vector fields into a divergence-free component plus a gradient (refer to the preliminary section for further details).

This expression shows that, as soon as the decomposition, also known as Helmholtz decomposition, $g= \mathcal{P}g + (I-\mathcal{P})g$  is operated, the resolvent operator $(-\Delta^\ast - z)^{-1}$ splits into a sum of two vector-valued resolvent operators associated with the Laplacian acting, respectively, on the components $\mathcal{P}g$ and $(I-\mathcal{P})g$ of $g.$ This fact, together with the validity of the corresponding results for Schr\"odinger, strongly suggests a positive answer to our question of whether Theorem~\ref{thm:Lamé_1d} extends to higher dimensions and indeed it is confirmed by the following result proved in this paper.

\begin{theorem}\label{1_Lamé}
	Let $d\geq 2$ and let $0<\gamma\leq 1/2$ if $d=2$ and $0\leq \gamma\leq 1/2$ if  $d\geq 3.$ Then any eigenvalue $z \in \C \setminus [0, \infty)$ of the perturbed Lamé operator $-\Delta^\ast + V$ satisfies
	\begin{equation}\label{bound_eigenvalues-Lamé}
		\abs{z}^\gamma \leq C_{\gamma, d, \lambda, \mu} \norm{V}_{L^{\gamma + \frac{d}{2}}(\R^d)}^{\gamma+\frac{d}{2}},
	\end{equation}
	with a constant $C_{\gamma, d, \lambda, \mu}$ independent of $V.$
\end{theorem}

Incidentally, observe that if $d\geq 3$ and $\gamma=0,$ the previous theorem provides a sufficient condition which guarantees absence of \emph{discrete} eigenvalues of $-\Delta^\ast + V.$ More specifically, if
\begin{equation}\label{smallness_latter_work}
	C_{0,d,\lambda,\mu} \norm{V}_{L^{\frac{d}{2}}(\R^d)}^{\frac{d}{2}}<1,
\end{equation} 
then the discrete spectrum $\sigma_d(-\Delta^\ast + V)$ is empty. In comparison with this result, we should mention a prior work of the present author~\cite{Lucrezia} also related to the problem of establishing sufficient conditions which disprove presence of eigenvalues. In~\cite{Lucrezia}, with a completely different approach based on the multipliers method as previously applied to Schr\"odinger operators by Fanelli, Krej\v{c}i\v{r}\'ik and Vega in~\cite{F_K_V}, \emph{total} absence of eigenvalues, \emph{i.e}, absence of both \emph{discrete} and \emph{embedded} eigenvalues, of $-\Delta^\ast + V$ was proved under the following Hardy-type subordination 
\begin{equation}\label{smallness_prior_work}
	\int_{\R^d} \abs{x}^2 \abs{V(x)}^2 \abs{u}^2\, dx\leq \Lambda^2\int_{\R^d} \abs{\nabla u}^2\, dx, \qquad \forall\, u \in [H^1(\R^d)]^d, 
\end{equation}
where $\Lambda$ is a suitable small constant (see condition (4) in~\cite{Lucrezia}).

As~\eqref{smallness_latter_work}, condition~\eqref{smallness_prior_work} is intrinsically a smallness condition, it is true, on the other hand it is satisfied by potentials with quite rough local singularities, \emph{e.g.}, $\abs{x}^{-2},$ which, instead, are ruled out by the $L^p$-type condition in Theorem~\ref{1_Lamé}.

\medskip
In attempt of including potentials with local stronger singularities, such as inverse-square type, we generalize Theorem~\ref{1_Lamé} by measuring the size of the potential in~\eqref{bound_eigenvalues-Lamé} in terms of less restrictive norms. Specifically, as first generalization, we consider potentials in the Morrey-Campanato class $\mathcal{L}^{\alpha,p}(\R^d)$ which is defined for $\alpha>0$ and $1\leq p\leq d/\alpha$ by
\begin{equation*}
	\norm{V}_{\mathcal{L}^{\alpha,p}(\R^d)}:=\sup_{x,r} r^\alpha \Big( r^{-d} \int_{B_r(x)} \abs{V(x)}^p\,dx \Big)^{\frac{1}{p}} <\infty.
\end{equation*}
In passing, notice that $1/\abs{x}^\alpha\in L^{d/\alpha,\infty}(\R^d)\subset \mathcal{L}^{\alpha, p},$ for $\alpha>0$ (we emphasize particularly the case $\alpha=2$) and $1\leq p<d/\alpha,$ however $1/\abs{x}^\alpha\notin L^{d/\alpha}=\mathcal{L}^{\alpha, d/\alpha}.$

More precisely we shall prove the following theorem.
\begin{theorem}\label{2_Lamé}
	Let $d\geq 2$ and let $(d-1)(2\gamma + d)/ 2(d-2\gamma)<p\leq \gamma + {d/2}$ with $0<\gamma\leq 1/2$ if $d=2$ and $0\leq \gamma\leq 1/2$ if $d\geq 3.$ Then any eigenvalue $z \in \C \setminus [0, \infty)$ of the perturbed Lamé operator $-\Delta^\ast + V$ satisfies
	\begin{equation}\label{bound_eigenvalues_MC-Lamé}
		\abs{z}^\gamma \leq C_{\gamma, p, d, \lambda, \mu} \norm{V}_{\mathcal{L}^{\alpha,p}(\R^d)}^{\gamma + \frac{d}{2}},
	\end{equation}
	with $\alpha=2d/(2\gamma+d)$ and a constant $C_{\gamma,p,d,\lambda, \mu}$ independent of $V.$ 
	\end{theorem}
	
	As a byproduct, in higher dimensions, the previous theorem provides a sufficient condition to guarantee absence of discrete eigenvalues. More precisely, the following corollary is immediate consequence of Theorem~\ref{2_Lamé}.
\begin{corollary}
	Let $d\geq 3,$ $(d-1)/2<p\leq d/2$ and assume
	\begin{equation*}
		C_{0, p, d, \lambda, \mu} \norm{V}_{\mathcal{L}^{2,p}(\R^d)}^{\frac{d}{2}}<1,
		\end{equation*}
		with $C_{0, p, d, \lambda, \mu}$ as in Theorem~\ref{2_Lamé} when $\gamma=0.$
				Then the perturbed Lamé operator $-\Delta^\ast + V$ has no eigenvalue in $\C\setminus [0,\infty).$
\end{corollary}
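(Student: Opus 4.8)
The plan is to deduce the corollary directly from Theorem~\ref{2_Lam�} by specializing to $\gamma=0$ and arguing by contradiction. First I would check that the parameter choices line up: putting $\gamma=0$ in Theorem~\ref{2_Lam�}, the admissibility condition $0\leq\gamma\leq 1/2$ for $d\geq 3$ is met, the lower threshold $(d-1)(2\gamma+d)/2(d-2\gamma)$ reduces to $(d-1)/2$, the upper threshold $\gamma+d/2$ reduces to $d/2$, and the Morrey--Campanato exponent $\alpha=2d/(2\gamma+d)$ reduces to $\alpha=2$. Hence the hypotheses $(d-1)/2<p\leq d/2$ together with $\norm{V}_{\mathcal{L}^{2,p}(\R^d)}<\infty$ are exactly what Theorem~\ref{2_Lam�} requires in this regime, and the constant $C_{0,p,d,\lambda,\mu}$ appearing in the corollary is the one that theorem furnishes at $\gamma=0$.

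Next I would suppose, for contradiction, that some $z\in\C\setminus[0,\infty)$ is an eigenvalue of $-\Delta^\ast+V$. Applying Theorem~\ref{2_Lam�} with $\gamma=0$ to this $z$ yields
\[
1=\abs{z}^{0}\leq C_{0,p,d,\lambda,\mu}\,\norm{V}_{\mathcal{L}^{2,p}(\R^d)}^{d/2}.
\]
But the standing assumption of the corollary is precisely $C_{0,p,d,\lambda,\mu}\,\norm{V}_{\mathcal{L}^{2,p}(\R^d)}^{d/2}<1$, so the displayed inequality forces $1<1$, which is absurd. Therefore no eigenvalue of $-\Delta^\ast+V$ can lie in $\C\setminus[0,\infty)$, which is the claim.

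I do not anticipate any real obstacle here: all the analytic content is carried by Theorem~\ref{2_Lam�}, and the only point demanding care is the elementary bookkeeping of the parameter ranges and of the exponent $\alpha$ at the endpoint $\gamma=0$, done above. If desired, one may add the observation that, under the present hypotheses, $[0,\infty)$ exhausts the essential spectrum of $-\Delta^\ast+V$, so every point of $\C\setminus[0,\infty)$ is either in the resolvent set or a discrete eigenvalue; the conclusion is then equivalently the emptiness of the discrete spectrum $\sigma_d(-\Delta^\ast+V)$, matching the phrasing used after Theorem~\ref{2_Lam�}.
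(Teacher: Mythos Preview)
Your proposal is correct and matches the paper's approach exactly: the paper states that the corollary is an ``immediate consequence of Theorem~\ref{2_Lam�}'' and gives no further argument, which is precisely the specialization to $\gamma=0$ plus contradiction that you wrote out. The parameter bookkeeping you included is accurate and is all that is needed.
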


Observe that Theorem~\ref{2_Lamé} does extend Theorem~\ref{1_Lamé}, indeed from 
\begin{equation*}
	L^{\gamma + \frac{d}{2}}(\R^d)= \mathcal{L}^{\frac{2d}{2\gamma + d}, \gamma + \frac{d}{2}}(\R^d)\subseteq \mathcal{L}^{\frac{2d}{2\gamma+d},p}(\R^d), 
\end{equation*}
which holds true for $1\leq p\leq \gamma + d/2,$ in particular it follows that
\begin{equation*}
\norm{V}_{\mathcal{L}^{\frac{2d}{2\gamma+d},p}(\R^d)}\leq C_{\gamma, p, d} \norm{V}_{L^{\gamma + \frac{d}{2}}(\R^d)},
\end{equation*} 
which immediately gives~\eqref{bound_eigenvalues-Lamé} as a consequence of~\eqref{bound_eigenvalues_MC-Lamé}.  

\smallskip
Actually our investigation goes even further providing eigenvalues bounds of the type~\eqref{bound_eigenvalues-Lamé} in terms of potentials belonging to the Kerman-Saywer space $\mathcal{KS}_\alpha(\R^d),$ which is defined for $0<\alpha<d$ by
\begin{equation*}
	\norm{V}_{\mathcal{KS}_\alpha(\R^d)}:= \sup_{Q} \Big( \int_{Q} \abs{V(x)}\, dx \Big)^{-1} \int_Q \int_Q \frac{\abs{V(x)} \abs{V(y)}}{\abs{x-y}^{d-\alpha}}\, dx\, dy<\infty,
\end{equation*}   
where the supremum is taken over all dyadic cubes $Q$ in $\R^d.$ 

As $\mathcal{L}^{\alpha, p}(\R^d)\subset \mathcal{KS}_{\alpha}(\R^d)$ if $p\neq 1$ (see Section 2 in~\cite{B_B_R_V}) it is true that the Kerman-Sayer class is wider than the Morrey-Campanato class. On the other hand, it turns out that assuming solely $V\in \mathcal{KS}_\alpha$ is not enough to get bound~\eqref{bound_eigenvalues_MC-Lamé} with $\norm{V}_{\mathcal{L}^{\alpha,p}(\R^d)}$ replaced by $\norm{V}_{\mathcal{KS}_\alpha(\R^d)}.$  Indeed, additionally, we will ask the potential $V$ to belong to the Muckenhoupt $A_2(\R^d)$ class of weights which is defined, in general, for $1<p<\infty$ as the set of measurable non-negative function $w$ such that
\begin{equation}\label{Q_p}
	Q_p(w):= \sup_Q \Bigg ( \frac{1}{\abs{Q}} \int_Q w(x)\, dx \Bigg) \Bigg ( \frac{1}{\abs{Q}} \int_Q w(x)^{- \frac{1}{p-1}}\, dx \Bigg)^{p-1}\leq C,
\end{equation}
where $Q$ is any cube in $\R^d$ and $C$ is a constant independent of $Q.$

More precisely, we shall prove the following result.
\begin{theorem}\label{KS_3_Lamé}
	Let $d\geq 2$ and let $1/3\leq \gamma<1/2$ if $d=2$ and $0\leq \gamma<1/2$ if $d\geq 3.$ If $V\in A_2(\R^d),$ then any eigenvalue $z\in \C\setminus [0, \infty)$ of the perturbed Lamé operator $-\Delta^\ast + V$ satisfies
	\begin{equation}\label{bound_eigenvalues_KS-Lamé}
		\abs{z}^\gamma \leq C_{\gamma,d,\lambda,\mu} \norm{V^\beta}_{\mathcal{KS}_{\alpha}}^{\frac{1}{\beta}(\gamma +\frac{d}{2})},
	\end{equation}
	with $\alpha=\frac{2d}{2\gamma +d} \beta$ and $\beta=\frac{(d+2\gamma)(d-1)}{2(d-2\gamma)}$ and  a constant $C_{\gamma,d,\lambda, \mu}$ independent of $V.$ 
\end{theorem}	

As a consequence of the previous result, one gets the following corollary on absence of discrete eigenvalues.
\begin{corollary} 
	Let $d\geq 3$ and assume
	\begin{equation*}
		C_{0, d,\lambda,\mu}\norm{V^\frac{d-1}{2}}_{\mathcal{KS}_{d-1}}^\frac{d}{d-1}<1,
	\end{equation*}
	with $C_{0, d, \lambda, \mu}$ as in Theorem~\ref{KS_3_Lamé} when $\gamma=0.$
	Then the perturbed Lamé operator $-\Delta^\ast + V$ has no eigenvalue in $\C\setminus [0, \infty).$
\end{corollary}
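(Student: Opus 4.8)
The plan is to obtain the corollary as the endpoint case $\gamma=0$ of Theorem~\ref{KS_3_Lamé}. Since $d\geq 3$, the value $\gamma=0$ is admissible there (the permitted range being $0\leq\gamma<1/2$ when $d\geq 3$), and we understand $V$ to satisfy the standing hypothesis $V\in A_2(\R^d)$ inherited from that theorem. First I would substitute $\gamma=0$ into the parameters of Theorem~\ref{KS_3_Lamé}: this gives $\beta=\frac{(d+2\gamma)(d-1)}{2(d-2\gamma)}=\frac{d(d-1)}{2d}=\frac{d-1}{2}$, hence $\alpha=\frac{2d}{2\gamma+d}\,\beta=2\beta=d-1$, while the exponent appearing on the right-hand side of \eqref{bound_eigenvalues_KS-Lamé} becomes $\frac{1}{\beta}\bigl(\gamma+\tfrac{d}{2}\bigr)=\frac{2}{d-1}\cdot\frac{d}{2}=\frac{d}{d-1}$. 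Consequently, \eqref{bound_eigenvalues_KS-Lamé} reads
\[
	\abs{z}^{0}\ \leq\ C_{0,d,\lambda,\mu}\,\norm{V^{\frac{d-1}{2}}}_{\mathcal{KS}_{d-1}}^{\frac{d}{d-1}}
\]
for \emph{every} eigenvalue $z\in\C\setminus[0,\infty)$ of $-\Delta^\ast+V$; since $\abs{z}^0=1$, this says $1\leq C_{0,d,\lambda,\mu}\,\norm{V^{(d-1)/2}}_{\mathcal{KS}_{d-1}}^{d/(d-1)}$.

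The second and final step is a contradiction argument. Suppose, contrary to the claim, that $-\Delta^\ast+V$ possesses an eigenvalue $z\in\C\setminus[0,\infty)$. Applying the inequality just displayed to this $z$ yields $1\leq C_{0,d,\lambda,\mu}\,\norm{V^{(d-1)/2}}_{\mathcal{KS}_{d-1}}^{d/(d-1)}$, which is incompatible with the smallness hypothesis $C_{0,d,\lambda,\mu}\,\norm{V^{(d-1)/2}}_{\mathcal{KS}_{d-1}}^{d/(d-1)}<1$ assumed in the corollary. This contradiction forces the discrete spectrum of $-\Delta^\ast+V$ in $\C\setminus[0,\infty)$ to be empty, which is exactly the assertion.

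I do not expect any genuine obstacle: the proof is a direct specialization of Theorem~\ref{KS_3_Lamé} together with the trivial identity $\abs{z}^0=1$, and the only things worth verifying carefully are the parameter computations for $\beta$, $\alpha$ and the right-hand exponent at $\gamma=0$ (carried out above) and the admissibility of $\gamma=0$ in Theorem~\ref{KS_3_Lamé}. The latter is precisely what pins down the restriction $d\geq 3$ in the statement of the corollary, since for $d=2$ that theorem only allows $\gamma\geq 1/3$ and hence cannot reach the endpoint $\gamma=0$.
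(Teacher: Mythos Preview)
Your proposal is correct and matches the paper's approach: the paper states the corollary as an immediate consequence of Theorem~\ref{KS_3_Lam�} without giving a separate proof, and your argument is exactly the specialization $\gamma=0$ with the straightforward contradiction, including the correct parameter computations $\beta=\tfrac{d-1}{2}$, $\alpha=d-1$, and exponent $\tfrac{d}{d-1}$.
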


It is worth comparing Theorem~\eqref{KS_3_Lamé} with the analogous result in~\cite{Lee_Seo} (\emph{cfr.} Theorem 1.1) for Schr\"odinger operators. Here, the bound~\eqref{bound_eigenvalues_KS-Lamé} was obtained without the additional assumption  $V\in A_2(\R^d),$ then showing a peculiar feature of the Lamé operator.    

Roughly speaking, the philosophy is that thanks to the Helmholtz decomposition which, as shown in~\eqref{favorable_form}, makes the resolvent operator $(-\Delta^\ast -z)^{-1}$ ``behave'' like a sum of two resolvent $(-\Delta-z)^{-1},$ at first we can perform our analysis at the level of the much more investigated Schr\"odinger operators, estimating the two components in~\eqref{favorable_form} separately. Then, in order to get bound~\eqref{bound_eigenvalues_MC-Lamé} and~\eqref{bound_eigenvalues_KS-Lamé}, respectively, these two pieces have to be recombined together on weighted $L^2$-spaces and it is at this step that the $A_2$ assumption comes into play. We refer to Section~\ref{Sec:Proofs} for greater details.

Marginally, observe that, even though the aforementioned strategy underpins the proof of both~\eqref{bound_eigenvalues_MC-Lamé} and~\eqref{bound_eigenvalues_KS-Lamé}, good property of the $\mathcal{L}^{\alpha, p}$ class allowed to drop the $A_2$ assumption in Theorem~\ref{2_Lamé} (see Lemma~\ref{good_property_MC} below).   

\medskip
It is very well known fact that distinguishing whether $-\Delta + V$ has  finite (possibly empty) or infinite discrete spectrum depends on the large $x$ fall-off of the potential. More specifically, it is mainly consequence of the uncertainty principle, quantified by the Hardy inequality
\begin{equation*}
	-\Delta \geq  \frac{(d-2)^2}{4} \frac{1}{~\abs{x}^2}
\end{equation*}
 that the borderline is marked by an inverse-square type behavior at infinity.     

In scattering theory and in particular in matter of determining existence of wave operators, again the large $x$ behavior of the potential plays a central role, in this context the threshold is given by a Coulomb-type asymptotic decay and positive results require $\abs{x}^{-\alpha}$ with $\alpha>1.$ Observe that our previous theorems essentially restrict to $\abs{x}^{-\alpha}$ decay with $\alpha> 2d/d+1$ and therefore are not fully satisfactory in the perspective of their possible application to stationary scattering theory. The gap is filled in the following result by using a suitable interpolation argument.

\begin{theorem}\label{4_Lamé}
	Let $d\geq 2,$ $\gamma>\frac{1}{2}$ and $\alpha>\gamma -\frac{1}{2}.$ Then any eigenvalue $z\in \C\setminus [0,\infty)$ of the perturbed Lamé operator $-\Delta^\ast + V$ satisfies
	\begin{equation}\label{bound_eigenvalues_4}
		\abs{z}^\gamma \leq C_{\gamma,\alpha,d, \lambda, \mu} \norm{V}_{L^q(\langle x \rangle^{2\alpha}\, dx)}^q,
	\end{equation}
	with $q=2\gamma + (d-1)/2$ and a constant $C_{\gamma, \alpha, d, \lambda, \mu}$ independent of $V.$
\end{theorem}
Here we used the notation $\langle x\rangle:= (1+ \abs{x}^2)^{1/2},$ moreover, given a measurable function $w,$ $L^p(w dx)$ stands for the $w$-weighted $L^p$ space on $\R^d$ with measure $w(x) dx.$

\bigskip
The rest of the paper is organized as follows: In Section 2, in attempt of making the paper sufficiently self-contained, we collect some preliminary facts on the Helmholtz decomposition. Here, some properties of Lamé operator are provided too. Among them, particular emphasis will be given to uniform estimates for the resolvent operator $(-\Delta^\ast -z)^{-1}$ which will represent the main ingredient in the proof of our aforementioned results.

Contrarily to the much more investigated Schr\"odinger operator, up to our knowledge, eigenvalue bounds of the form~\eqref{real-valued-V-bound} for the perturbed Lamé operator are unknown even in the self-adjoint situation. Although in this case the proof of~\eqref{real-valued-V-bound} follows almost verbatim the instead-well-known one for Schr\"odinger, we decided to dedicate Section 3 to prove it anyhow. The advantage of this choice comes out in the possibility of explicitly showing the deep differences and difficulties that arise passing from the self-adjoint to the non self-adjoint framework which, instead, is fully analyzed in Section 4. In particular, Section 4 is devoted to the prood of the main results stated in the introduction.

\subsection*{Notations.}

In this paper both scalar and vector-valued functions are considered. In attempt of produce no confusion, we clarify here that notation like $f, g, u$ are reserved for vector-fields, instead $\phi, \psi$ are set aside for scalar functions. 

\medskip
When the letter adopted for denoting a vector field contains already a subscript, the standard subscript notation $\cdot_j$ for the $j$-th component will be replaced by the superscript $\cdot^{(j)},$ \emph{e.g}, the $j$-th component of the vector field $u_S$ is indicated by $u_S^{(j)}.$ 

\medskip
We use the following definition for the $L^p$- norm of a vector field $u\in [L^p(\R^d)]^d:$
\begin{equation*}
	\norm{u}_{[L^p(\R^d)]^d}:= \Big (\sum_{j=1}^d \norm{u_j}_{L^p(\R^d)}^p \Big)^\frac{1}{p}.
\end{equation*}

\medskip
In order to lighten the presentation, in the following we usually abbreviate both $\norm{\cdot}_{L^p(\R^d)}$ and $\norm{\cdot}_{[L^p(\R^d)]^d},$ the $L^p$-norm of scalar and vector-valued functions, respectively, with $\norm{\cdot}_p.$ This, in general, could create some ambiguity, on the other hand, it will be clear from the context and the notation used there if $\norm{\cdot}_p$ stands for one or the other norm.

\medskip
Again, the notation $\langle \cdot, \cdot \rangle$ will denote both $\langle \cdot, \cdot \rangle_{L^2(\R^d)}$ and $\langle \cdot, \cdot \rangle_{[L^2(\R^d)]^d},$ where the latter extends in an obvious way the usual definition of the former, namely given $f, g \in [L^2(\R^d)]^d,$ one defines 
\begin{equation*}
	\langle f, g \rangle_{[L^2(\R^d)]^d}:= \sum_{j=1}^d \langle f_j, g_j \rangle_{L^2(\R^d)}.
\end{equation*}

\medskip
Let $E$ and $F$ be two Banach spaces and let $T\colon E \to F$ be a bounded linear operator from $E$ into $F.$  The notation $\norm{T}_{E\to F}$ will be used to denote the operator norm of $T.$

\subsection*{Acknowledgment}
The author is deeply grateful to Prof. L. Fanelli for bringing the attention to the setting of the problem and for valuable comments on the draft of the manuscript. The author also thanks Prof. D. Krej\v{c}i\v{r}\'ik for useful discussions which greatly enrich the  paper. The author is greatful to Prof. M. Correggi for helpful references' suggestions. The author thanks   
the hospitality of \emph{Sapienza University of Rome} where this work was initiated and \emph{Czech Technical University in Prague} where it was developed in part.
The author gratefully acknowledges financial support by the Deutsche Forschungsgemeinschaft (DFG) through CRC 1173.

\section{Preliminaries}
This section is concerned with recalling some properties connected with the Helmholtz decomposition together with  stating and proving some related consequences on Lamé operators that will strongly enter the proof of our results later. If the first part wants to be just a remainder of very well known results on the Helmholtz decomposition and therefore can be safely skipped by any reader already familiar with this topic, the subsequent subsections, namely Subsection~\ref{ss:resolvent_repr} and Subsection~\ref{u_r_e}, represent and important part of the paper. More specifically, it is there that uniform resolvent estimates for the resolvent operator $(-\Delta^\ast-z)^{-1}$ are provided which are important in their own right.

\subsection{Helmholtz decomposition}\label{Ssec:Helmholtz}
\begin{theorem}[Helmholtz decomposition]\label{Helmholtz_decomposition}
	Let $d\geq 2$ and let $\Omega\subseteq \R^d$ be either an open, bounded, simply connected, Lipschitz domain or the entire $\R^d.$ Then any square-integrable vector field  $f= (f^{(1)}, f^{(2)}, \dots, f^{(d)}) \in [L^2(\Omega)]^d$ can be uniquely decomposed as
	\begin{equation*}
		f=f_S+ f_P,
	\end{equation*}
	where $f_S$ is a divergence-free vector field with null normal derivative and $f_P$ is a gradient. Moreover the two components are orthogonal in a $L^2$- sense. Specifically, the Pythagorean identity
	\begin{equation}\label{Pythagorean_L2}
		\norm{f}_{[L^2(\Omega)]^d}^2= \norm{f_S}_{[L^2(\Omega)]^d}^2 + \norm{f_P}_{[L^2(\Omega)]^d}^2
	\end{equation}
	holds true.
\end{theorem}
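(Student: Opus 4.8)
The plan is to realize the decomposition as the orthogonal splitting of the Hilbert space $[L^2(\Omega)]^d$ along the (closed) subspace of gradient fields and its orthogonal complement. Set
\[
	G(\Omega):=\overline{\{\nabla \phi : \phi \in H^1(\Omega)\}}\subseteq [L^2(\Omega)]^d .
\]
First I would check that $G(\Omega)$ is already closed, so that the closure is cosmetic: for $\Omega=\R^d$ this is Plancherel, since $\phi\mapsto\nabla\phi$ identifies $\dot H^1(\R^d)$ (modulo constants) isometrically with a closed subspace of $[L^2(\R^d)]^d$; for a bounded Lipschitz domain it follows from the Poincar\'e--Wirtinger inequality $\inf_{c\in\R}\norm{\phi-c}_{2}\leq C\norm{\nabla\phi}_{2}$, which makes $\nabla$ bounded below on $H^1(\Omega)/\R$ and hence of closed range. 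Granted this, the projection theorem gives the orthogonal decomposition $[L^2(\Omega)]^d=G(\Omega)\oplus G(\Omega)^\perp$; writing $f=f_P+f_S$ accordingly produces a gradient part $f_P\in G(\Omega)$ and a remainder $f_S\in G(\Omega)^\perp$, while~\eqref{Pythagorean_L2} is precisely the Pythagorean identity for this orthogonal sum. Uniqueness is then automatic: a second decomposition $f=f_P'+f_S'$ yields $f_P-f_P'=f_S'-f_S\in G(\Omega)\cap G(\Omega)^\perp=\{0\}$.

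The substantive point is the identification of the orthogonal complement,
\[
	G(\Omega)^\perp=\{\,u\in [L^2(\Omega)]^d : \div u=0 \text{ in }\D'(\Omega),\ u\cdot n=0 \text{ on }\partial\Omega\,\},
\]
the normal trace being meaningful in $H^{-1/2}(\partial\Omega)$ because $u$ together with $\div u=0\in L^2$ lies in $H(\div,\Omega)$. For ``$\subseteq$'': if $u\perp\nabla\phi$ for all $\phi\in C_0^\infty(\Omega)$, then $\langle u,\nabla\phi\rangle=0$ says exactly that $\div u=0$ distributionally; testing next against general $\phi\in H^1(\Omega)$ and using the Gauss--Green formula $\langle u,\nabla\phi\rangle+\langle\div u,\phi\rangle=\langle u\cdot n,\phi\rangle_{\partial\Omega}$ forces $\langle u\cdot n,\phi|_{\partial\Omega}\rangle_{\partial\Omega}=0$ for every such $\phi$, and since the Dirichlet trace maps $H^1(\Omega)$ onto $H^{1/2}(\partial\Omega)$ we conclude $u\cdot n=0$. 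The inclusion ``$\supseteq$'' runs the same chain of identities backwards. In the case $\Omega=\R^d$ there is no boundary, the normal-trace condition is vacuous, and one may read everything off on the Fourier side instead: $\widehat{G(\R^d)}$ consists of the fields pointing along $\xi$, its orthogonal complement of the fields $v$ with $v(\xi)\cdot\xi=0$, and the associated projection is the Leray projection $\mathcal{P}$ with matrix symbol $I-\xi\otimes\xi/\abs{\xi}^2$, which at the same time makes the representation $f=\mathcal{P}f+(I-\mathcal{P})f$ explicit.

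I expect the only genuinely delicate ingredient to be the low-regularity trace theory underlying this identification: attaching a meaning to $u\cdot n$ for $L^2$ vector fields with $L^2$ divergence on a merely Lipschitz boundary, the surjectivity of the Dirichlet trace $H^1(\Omega)\to H^{1/2}(\partial\Omega)$, and the validity of the Gauss--Green formula in that setting — this is exactly where the boundedness and Lipschitz regularity of $\Omega$ are used. By contrast, the closedness of $G(\Omega)$, the abstract orthogonal projection, and the Pythagorean identity are routine.
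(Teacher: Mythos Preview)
Your argument is correct and is the standard abstract Hilbert-space route to the Helmholtz decomposition, but it is genuinely different from the paper's. The paper proceeds \emph{constructively}: it writes $f=(f-\nabla\psi)+\nabla\psi$ and forces $\psi$ to solve the Neumann problem $\Delta\psi=\div f$ in $\Omega$, $\partial_\nu\psi=f\cdot\nu$ on $\partial\Omega$, invoking solvability of this problem in $H^1(\Omega)$ (modulo constants) as a black box; uniqueness is then checked directly by an integration-by-parts computation, and orthogonality follows from the properties of the two pieces. Your route instead defines the gradient subspace, proves it is closed via Poincar\'e--Wirtinger (or Plancherel), applies the projection theorem, and only afterwards identifies $G(\Omega)^\perp$ using $H(\div)$ trace theory and Gauss--Green. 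The paper's approach has the advantage that it hands you the explicit representation $f_P=\nabla\psi$ with $\psi$ characterized by a boundary-value problem, which the paper immediately exploits (the variational characterization of the Neumann solution is used in the very next lemma to bound the Leray projector, and the equation $\Delta\psi=\div f$ on $\R^d$ yields the Riesz-transform formula). Your approach is cleaner about regularity---you never pretend $\div f$ or $f\cdot\nu$ are anything better than distributional---and it does not outsource the existence step to an unproved PDE result, at the price of importing the normal-trace theory on Lipschitz domains that you correctly flag as the delicate point.
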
 
\begin{proof}
	Though the proof simply relies on classical techniques, for sake of completeness we provide a brief sketch of the proof.
	
	Let $f\in [L^2(\Omega)]^d$ and let $\nu$ denote the unit outward normal vector at the boundary $\partial \Omega.$
	We consider the trivial decomposition $f=f-\nabla \psi + \nabla \psi.$ In order for $f-\nabla \psi$ to be the divergence-free component $f_S$ of $f$ with null normal derivative, $\psi$ must satisfy the boundary value problem
	\begin{equation}\label{Poisson_Neumann}
		\begin{system}
			\Delta \psi&= \div f  \quad &\text{in}\, \Omega\\
			\frac{\partial \psi}{\partial \nu}&= f\cdot \nu &\text{on}\, \partial \Omega.
		\end{system}
	\end{equation}
	In passing, observe that~\eqref{Poisson_Neumann} is a Poisson problem with Neumann boundary conditions, therefore it admits a unique solution $\psi\in H^1(\Omega)$ modulo additive constants.
	
	Let us now prove the uniqueness of the decomposition. Let $f_1, f_2 \in [L^2(\Omega)]^d$ and let $\psi_1, \psi_2\in H^1(\Omega)$ be such that $\div f_1=\div f_2=0$ in $\Omega$ and $f_1\cdot \nu=f_2\cdot \nu=0$ on $\partial \Omega$ and with the property that $f=f_1- \nabla \psi_1= f_2-\nabla \psi_2$ which gives
	\begin{equation*}
		f_1-f_2=\nabla(\psi_1-\psi_2).
	\end{equation*}
	Multiplying the latter by $f_1-f_2,$ integrating over $\Omega$ and integrating by parts, one has
	\begin{equation*}
		\int_\Omega \abs{f_1-f_2}^2\, dx=\int_\Omega (f_1-f_2)\cdot \nabla (\psi_1-\psi_2)\, dx=-\int_\Omega \div (f_1-f_2) (\psi_1-\psi_2)\, dx=0.
	\end{equation*}
	It follows that $f_1=f_2$ and therefore $\psi_1=\psi_2$ up to an additive constant. To sum up, we have proved that $f$ can be uniquely written as a sum of a divergence free vector field $f_S$ with null normal derivative on the boundary $\partial \Omega$ and a gradient $f_P,$ where an explicit expression for $f_S$ and $f_P$ is provided by the former construction, specifically
	\begin{equation}\label{explicit_components}
		f_S:= f-\nabla \psi, \qquad f_P:=\nabla \psi,
	\end{equation}
	with $\psi\in H^1(\Omega)$ unique (up to additive constant) solution of~\eqref{Poisson_Neumann}. 
	
	At last, observe that the $L^2$-orthogonality of $f_S$ and $f_P$ and in particular identity~\eqref{Pythagorean_L2} are immediate consequences of the properties of the two components.
	This yields the proof.
\end{proof}
\begin{remark}
	Notice that, as soon as the vector field $f$ is more regular, for instance, say $f\in [H^1(\Omega)]^d$ as it suits our purposes, then the components $f_S$ and $f_P$ of the Helmholtz decomposition are even $H^1$- orthogonal and in particular
	\begin{equation*}
		\norm{\nabla f}_{[L^2(\Omega)]^d}^2= \norm{\nabla f_S}_{[L^2(\Omega)]^d}^2 + \norm{\nabla f_P}_{[L^2(\Omega)]^d}^2. 
	\end{equation*}
\end{remark}

\medskip
Now we are in position to provide the rigorous definition of the so-called Leray projection operator already mentioned in the introduction.
\begin{definition}\label{def:Leray_projector}
	Consider the setting of Theorem~\ref{Helmholtz_decomposition}. Let $\mathcal{P}$ be the orthogonal projection of $[L^2(\Omega)]^d$ into the subspace of divergence-free vector fields with null normal derivative on $\partial \Omega.$ Then $\mathcal{P}$ is called Leray projection operator. More precisely, for any $f\in [L^2(\Omega)]^d,$ it holds that
	\begin{equation*}
		\mathcal{P}f=\mathcal{P}(f_S+ f_P)=f_S,
	\end{equation*}
	where $f=f_S+f_P$ is the Helmholtz decomposition of $f$ as constructed in the previous theorem.
\end{definition}

The following result is easily proven.
\begin{lemma}\label{boundedness_Leray}
	Consider the setting of Theorem~\ref{Helmholtz_decomposition}. The Leray projection operator $\mathcal{P}\colon [L^2(\Omega)]^d\to [L^2(\Omega)]^d$ is bounded.
\end{lemma}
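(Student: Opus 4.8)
The plan is to establish the $L^2$-boundedness of the Leray projection $\mathcal{P}$ directly from the Pythagorean identity~\eqref{Pythagorean_L2} provided by Theorem~\ref{Helmholtz_decomposition}. Indeed, for any $f\in[L^2(\Omega)]^d$ we write its (unique) Helmholtz decomposition $f=f_S+f_P$ as in Theorem~\ref{Helmholtz_decomposition}, so that by definition $\mathcal{P}f=f_S$. The identity~\eqref{Pythagorean_L2} then gives immediately
\begin{equation*}
	\norm{\mathcal{P}f}_{[L^2(\Omega)]^d}^2=\norm{f_S}_{[L^2(\Omega)]^d}^2\leq \norm{f_S}_{[L^2(\Omega)]^d}^2+\norm{f_P}_{[L^2(\Omega)]^d}^2=\norm{f}_{[L^2(\Omega)]^d}^2,
\end{equation*}
whence $\norm{\mathcal{P}f}_{[L^2(\Omega)]^d}\leq\norm{f}_{[L^2(\Omega)]^d}$ for every $f$. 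This shows that $\mathcal{P}$ is bounded with operator norm at most $1$ (and in fact equal to $1$, since $\mathcal{P}$ fixes any nonzero divergence-free field with null normal derivative, such fields being nontrivial in the stated geometries).

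The key steps, in order, are: (i) invoke Theorem~\ref{Helmholtz_decomposition} to obtain, for an arbitrary $f\in[L^2(\Omega)]^d$, the decomposition $f=f_S+f_P$ together with~\eqref{Pythagorean_L2}; (ii) recall from Definition~\ref{def:Leray_projector} that $\mathcal{P}f=f_S$; (iii) drop the nonnegative term $\norm{f_P}_{[L^2(\Omega)]^d}^2$ in~\eqref{Pythagorean_L2} to get the norm inequality; (iv) observe that $\mathcal{P}$ is linear, which again is inherited from the uniqueness part of Theorem~\ref{Helmholtz_decomposition} (if $f=f_S+f_P$ and $g=g_S+g_P$ are the decompositions, then $(f_S+g_S)+(f_P+g_P)$ is a decomposition of $f+g$ into a divergence-free field with null normal derivative plus a gradient, hence by uniqueness it is \emph{the} decomposition, so $\mathcal{P}(f+g)=f_S+g_S=\mathcal{P}f+\mathcal{P}g$; scalar homogeneity is analogous).

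Honestly, there is no real obstacle here: the content of the lemma is entirely contained in the orthogonality statement~\eqref{Pythagorean_L2} of Theorem~\ref{Helmholtz_decomposition}, which is why the statement is labelled as easily proven. The only point requiring a word of care is the linearity of $\mathcal{P}$, which is not literally part of~\eqref{Pythagorean_L2} but follows at once from the uniqueness of the Helmholtz decomposition; once linearity and the pointwise norm bound $\norm{\mathcal{P}f}\leq\norm{f}$ are in hand, boundedness (indeed, $\norm{\mathcal{P}}_{[L^2(\Omega)]^d\to[L^2(\Omega)]^d}=1$) is immediate.
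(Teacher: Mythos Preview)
Your argument is correct and, in fact, more direct than the paper's. You simply read off the bound $\norm{\mathcal{P}f}_{[L^2(\Omega)]^d}\leq\norm{f}_{[L^2(\Omega)]^d}$ from the Pythagorean identity~\eqref{Pythagorean_L2}, which is already part of Theorem~\ref{Helmholtz_decomposition}. The paper instead goes back to the construction of $\psi$ in~\eqref{Poisson_Neumann} and observes that the weak solution of that Neumann problem is the minimizer of the functional $J(\phi)=\int_\Omega\abs{\nabla\phi-f}^2\,dx$ over $H^1(\Omega)$; comparing with $\phi=0$ gives $\norm{\mathcal{P}f}^2=\norm{f-\nabla\psi}^2=J(\psi)\leq J(0)=\norm{f}^2$. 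Both routes yield the same sharp bound $\norm{\mathcal{P}}\leq 1$, but yours uses only the orthogonality already recorded in~\eqref{Pythagorean_L2} and avoids invoking the variational characterization of the Neumann solution; the paper's approach, on the other hand, is self-contained in the sense that it does not rely on having first established the $L^2$-orthogonality of $f_S$ and $f_P$. Your remark on linearity via uniqueness is a nice touch that the paper leaves implicit.
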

\begin{proof}
	By definition, for any $f\in [L^2(\Omega)]^d,$ $\mathcal{P}f:= f-\nabla \psi,$ where $\psi\in H^1(\Omega)$ is the unique solution to the boundary value problem~\eqref{Poisson_Neumann}. In particular, $\psi$ achieves $\int_\Omega \abs{\nabla \psi - f}^2\, dx= \inf_{\phi\in H^1(\Omega)} J(\phi),$ where
	\begin{equation*}	
		J(\phi):=\int_\Omega \abs{\nabla \phi -f}^2\, dx.
	\end{equation*}
	Now, from $\inf_{\phi\in H^1(\Omega)} J(\phi)\leq \int_\Omega \abs{f}^2\, dx,$ we immediately get $\norm{\mathcal{P}f}_{[L^2(\Omega)]^d}\leq \norm{f}_{[L^2(\Omega)]^d},$ which is the thesis.
\end{proof}

\medskip
As showed by the following result, in the specific case $\Omega=\R^d,$ the Leray projection operator has a favorable form in terms of Riesz transform $\mathcal{R}=(\mathcal{R}_1, \mathcal{R}_2, \dots, \mathcal{R}_d)$ defined for any $\phi\in L^2(\R^d),$ in Fourier space, by
  \begin{equation}\label{Riesz_transform}
		\widehat{\mathcal{R}_j \phi}(\xi)= -i \frac{\xi_j}{\abs{\xi}} \widehat{\phi}(\xi), \qquad j=1,2,\dots, d.
	\end{equation}
	
\begin{lemma}
	Consider the setting of Definition~\ref{def:Leray_projector}, fix $\Omega=\R^d$ and let $f\in [L^2(\R^d)]^d$ be any square-integrable vector field on $\R^d.$ Then the $j$-th component of $\mathcal{P}f$ can be written as
	\begin{equation}\label{representation_Leray}
		(\mathcal{P}f)_j= f_j + \sum_{k=1}^d \mathcal{R}_j \mathcal{R}_k f_k,
	\end{equation}
	for any $j=1,2,\dots, d$ and where $\mathcal{R}$ denote the Riesz transform defined in~\eqref{Riesz_transform}. 
\end{lemma}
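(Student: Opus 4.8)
The plan is to verify the formula \eqref{representation_Leray} by passing to Fourier space, where both the Helmholtz decomposition and the Riesz transforms act as explicit Fourier multipliers. First I would recall from the construction in Theorem~\ref{Helmholtz_decomposition} (specialized via \eqref{explicit_components} to $\Omega=\R^d$) that $\mathcal{P}f=f-\nabla\psi$, where $\psi\in H^1(\R^d)$ solves $\Delta\psi=\div f$ on all of $\R^d$ (the Neumann boundary condition being vacuous). Taking the Fourier transform of this Poisson equation gives $-\abs{\xi}^2\widehat\psi(\xi)=i\sum_{k=1}^d\xi_k\widehat{f_k}(\xi)$, hence $\widehat\psi(\xi)=-i\abs{\xi}^{-2}\sum_{k=1}^d\xi_k\widehat{f_k}(\xi)$ for $\xi\neq0$. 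Since $\widehat{(\nabla\psi)_j}(\xi)=i\xi_j\widehat\psi(\xi)$, we obtain
\begin{equation*}
	\widehat{(\mathcal{P}f)_j}(\xi)=\widehat{f_j}(\xi)-i\xi_j\widehat\psi(\xi)=\widehat{f_j}(\xi)-\sum_{k=1}^d\frac{\xi_j\xi_k}{\abs{\xi}^2}\widehat{f_k}(\xi).
\end{equation*}

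Next I would compare this with the multiplier of the composition $\mathcal{R}_j\mathcal{R}_k$. From the definition \eqref{Riesz_transform}, $\widehat{\mathcal{R}_j\mathcal{R}_k f_k}(\xi)=\bigl(-i\xi_j/\abs{\xi}\bigr)\bigl(-i\xi_k/\abs{\xi}\bigr)\widehat{f_k}(\xi)=-(\xi_j\xi_k/\abs{\xi}^2)\widehat{f_k}(\xi)$. Summing over $k$ and comparing termwise with the display above yields $\widehat{(\mathcal{P}f)_j}(\xi)=\widehat{f_j}(\xi)+\sum_{k=1}^d\widehat{\mathcal{R}_j\mathcal{R}_k f_k}(\xi)$ for a.e.\ $\xi$. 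Since the Riesz transforms are bounded on $L^2(\R^d)$ (their symbols $-i\xi_j/\abs{\xi}$ are bounded by $1$), the right-hand side is a well-defined $L^2$ vector field, and the two sides agree in $L^2$; applying the inverse Fourier transform gives exactly \eqref{representation_Leray}.

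There is no serious obstacle here; the only point requiring a modicum of care is the behaviour at $\xi=0$, where $\abs{\xi}^{-2}$ is singular. This is handled by noting that $\widehat\psi$ is defined as an $L^2$ function (by the variational solvability recorded in Theorem~\ref{Helmholtz_decomposition}, or equivalently because $\xi_j\xi_k/\abs{\xi}^2$ is bounded), so the singleton $\{0\}$ has measure zero and does not affect the identification of $L^2$ functions. One should also remark that the decomposition $f=(f-\nabla\psi)+\nabla\psi$ produced by this Fourier computation does agree with the Helmholtz decomposition of Theorem~\ref{Helmholtz_decomposition}: the first summand has symbol $\bigl(\delta_{jk}-\xi_j\xi_k/\abs{\xi}^2\bigr)$ which annihilates $\xi$, hence is divergence-free, while the second is manifestly a gradient, so uniqueness of the decomposition forces $(\mathcal{P}f)_j$ to be the first summand. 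This closes the proof.
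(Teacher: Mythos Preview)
Your proof is correct and follows essentially the same route as the paper: both arguments pass to Fourier space, solve $\Delta\psi=\div f$ as $\widehat\psi(\xi)=-i\abs{\xi}^{-2}\sum_k\xi_k\widehat{f_k}(\xi)$, compute $\widehat{\partial_j\psi}$, and identify the resulting symbol $\xi_j\xi_k/\abs{\xi}^2$ with $-\mathcal{R}_j\mathcal{R}_k$. Your added remarks on the measure-zero issue at $\xi=0$ and on why the Fourier-side decomposition coincides with the Helmholtz decomposition are not in the paper's proof but are harmless clarifications.
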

\begin{proof}
	Let $f\in [L^2(\R^d)]^d.$ It follows from Theorem~\ref{Helmholtz_decomposition} and Definition~\ref{def:Leray_projector} that $\mathcal{P}f=f_S= f-\nabla \psi,$ where $\psi$ satisfies $\Delta \psi= \div f.$ In Fourier space this yields
	\begin{equation*}
		\widehat{\psi}(\xi)= - i\sum_{k=1}^d \frac{\xi_k}{\abs{\xi}^2} \widehat{f_k}(\xi).
	\end{equation*}
	In particular, for any $j=1,2,\dots, d,$ this gives
	\begin{equation*}
		\widehat{\partial_j \psi}(\xi):=i \xi_j \widehat{\psi}(\xi)=\sum_{k=1}^d \frac{\xi_j \xi_k}{\abs{\xi}^2} \widehat{f_k}(\xi)
	\end{equation*}
	and so, by using the Fourier representation of the Riesz transform~\eqref{Riesz_transform}, 
	\begin{equation}\label{nabla-psi}
		\partial_j \psi=- \sum_{k=1}^d \mathcal{R}_j \mathcal{R}_k f_k,
	\end{equation}
	for any $j=1,2,\dots, d.$
	From the latter, one immediately gets~\eqref{representation_Leray} and this concludes the proof.
\end{proof}

\medskip
In passing, observe that since $\psi$ is a solution of~\eqref{Poisson_Neumann}, then it follows easily by elliptic estimates that 
\begin{equation}\label{starting_boundedness}
	\norm{\nabla \psi}_{[L^2(\Omega)]^d}\leq \norm{f}_{[L^2(\Omega)]^d}
\end{equation} 
(the same would follow from the boundedness of the Leray operator $\mathcal{P}$ (see Lemma~\ref{boundedness_Leray}) but in this case we would get a worse bound).

Now, we want to show that as a consequence of the representation~\eqref{nabla-psi} of $\nabla \psi$ in terms of the Riesz transform, we can get boundedness of type~\eqref{starting_boundedness} replacing the $L^2$-norm with suitable $L^p$-\emph{weighted} norms.  

Notice that, in general, proving boundedness in weighted $L^p$-space does not come as a mere consequence of elliptic estimates as for~\eqref{starting_boundedness} and, in fact, does require a more involved analysis. In our case, it will follow from special well known properties of the Riesz transform that we summarize in the following lemma.

\begin{lemma}[Boundedness Riesz transform]
	\label{boundedness-Riesz-transform}
	Let $1<p<\infty$ and $p'$ such that $1/p+1/p'=1$ and let $w$ be a weight in the  $A_p(\R^d)$-class (see definition~\eqref{Q_p}). Then, for any $j=1,2,\dots, d,$ the following bounds on the operator norms of the Riesz transform $\mathcal{R}_j$ hold true:
	\begin{equation}\label{Riesz_1}
		\norm{\mathcal{R}_j}_{L^p\to L^p}=\cot\Big(\frac{\pi}{2p^\ast} \Big)=:c_p, \qquad p^\ast:=\max\{p,p'\},  
	\end{equation}
	\begin{equation}\label{Riesz_2}
		\norm{\mathcal{R}_j}_{L^p(w)\to L^p(w)}\leq c_{p,d}\, Q_p(w)^r, \qquad r:= \max\{1, p'/p\}.
	\end{equation}
	Moreover, both the bound are sharp, \emph{i.e.} the best possible bound is established.
\end{lemma}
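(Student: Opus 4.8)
The plan is to establish the two claims~\eqref{Riesz_1} and~\eqref{Riesz_2} separately, in each case reducing the $d$-dimensional statement to a sharp one-dimensional (respectively dyadic) result already available in the literature, and then recording the corresponding extremizing examples to get sharpness.

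For the unweighted identity~\eqref{Riesz_1} I would use the \emph{method of rotations}. Writing the convolution kernel of $\mathcal{R}_j$ as a spherical average of directional Hilbert transforms, $\mathcal{R}_j = c_d\int_{S^{d-1}}\omega_j\, H_\omega\, d\omega$ with $\|H_\omega\|_{L^p(\R^d)\to L^p(\R^d)}=\|H\|_{L^p(\R)\to L^p(\R)}$, Minkowski's inequality and the normalization $c_d\int_{S^{d-1}}|\omega_j|\,d\omega=1$ give $\|\mathcal{R}_j\|_{L^p\to L^p}\leq \|H\|_{L^p(\R)\to L^p(\R)}=\cot(\pi/2p^\ast)$, the last equality being Pichorides' sharp bound for the Hilbert transform. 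For the reverse inequality one tests $\mathcal{R}_j$ on functions $\phi(x)=g(x_j)\psi_R(x')$, where $g$ is a near-extremizer for $H$ and $\psi_R(x')=\psi_0(x'/R)$ is a slowly varying bump in the transverse variables; as $R\to\infty$ the multiplier $-i\xi_j/\abs{\xi}$ restricted to the support of $\widehat{\phi}$ approaches $-i\sgn(\xi_j)$, so $\mathcal{R}_j\phi$ converges to $(Hg)(x_j)\psi_R(x')$ and the quotient $\norm{\mathcal{R}_j\phi}_p/\norm{\phi}_p$ approaches $\|H\|_{L^p(\R)\to L^p(\R)}$. This matching of the upper and lower bounds is exactly the theorem of Iwaniec and Martin, which I would quote; it gives both the identity and its sharpness.

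For the weighted bound~\eqref{Riesz_2} I would proceed in two steps. First, the linear $A_2$ estimate $\norm{\mathcal{R}_j}_{L^2(w)\to L^2(w)}\leq c_d\, Q_2(w)$: this is Petermichl's sharp $A_2$ theorem for Riesz transforms, proved by representing $\mathcal{R}_j$ as an average, over dyadic grids and dilations, of Haar shift operators and verifying the linear $A_2$ bound for those model operators. Second, I would pass from $p=2$ to general $1<p<\infty$ by the sharp weighted extrapolation theorem of Dragi\v{c}evi\'c, Grafakos, Pereyra and Petermichl: an operator with $\norm{T}_{L^2(w)\to L^2(w)}\lesssim Q_2(w)$ for all $w\in A_2$ automatically satisfies $\norm{T}_{L^p(w)\to L^p(w)}\lesssim Q_p(w)^{\max\{1,\,1/(p-1)\}}$ for all $w\in A_p$, and since $1/(p-1)=p'/p$ this is precisely the exponent $r$ in~\eqref{Riesz_2}. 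Sharpness of the power $Q_p(w)^r$ is obtained, as usual, by inserting power weights $w(x)=\abs{x}^{a}$ with $a$ approaching the critical endpoint $d(p-1)$ (respectively $-d$) of the $A_p$-range: there $Q_p(w)$ blows up at a definite linear rate in the distance to the endpoint, while a direct computation of $\mathcal{R}_j$ on an adapted test function shows $\norm{\mathcal{R}_j}_{L^p(w)\to L^p(w)}$ blows up like that rate to the power $r$.

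The genuine difficulties here are entirely ``imported'': the coincidence of the $L^p$-operator norm of $\mathcal{R}_j$ with that of the Hilbert transform (Iwaniec--Martin) and, above all, the sharp linear $A_2$ bound for Riesz transforms (Petermichl), which is a deep result and a forerunner of the general $A_2$ theorem. By contrast the method of rotations, the extrapolation step, and the power-weight examples that certify sharpness are routine, and that is where I would keep the exposition brief.
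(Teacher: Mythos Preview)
Your proposal is correct and, in fact, far more detailed than the paper's own proof: the paper does not argue either bound at all but simply cites the literature, referring to Ba\~nuelos--Wang~\cite{B_W} (with~\cite{Ca_Zy}) for~\eqref{Riesz_1} and to Petermichl~\cite{Petermichl} (with~\cite{C_F}) for~\eqref{Riesz_2}.

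The only point of divergence worth noting is the attribution for~\eqref{Riesz_1}: you route the sharp constant $\cot(\pi/2p^\ast)$ through the Iwaniec--Martin argument (method of rotations reducing to Pichorides' bound for the Hilbert transform), whereas the paper points to Ba\~nuelos--Wang, whose proof is based on sharp martingale inequalities rather than rotations. Both approaches yield the same identity, so there is no gap; your sketch simply follows a different (and arguably more elementary) proof from the literature. For~\eqref{Riesz_2} your outline---Petermichl's linear $A_2$ bound plus sharp extrapolation, with power weights for sharpness---is exactly the content of the cited reference~\cite{Petermichl}, so here the two align completely.
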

\begin{proof}
	The proof of the sharp bound~\eqref{Riesz_1} can be found in~\cite{B_W} (see also~\cite{Ca_Zy}), inequality~\eqref{Riesz_2} can be found in~\cite{Petermichl} (see also~\cite{C_F}).
\end{proof}

Now we are in position to state and prove the aforementioned boundedness of the operator $\nabla \psi.$ More precisely, we are interested in proving the following result.
\begin{lemma}\label{lemma:orthogonality}
	Consider $\psi$ the unique solution to 
	\begin{equation}\label{Poisson}
		\Delta \psi= \div u.
	\end{equation}
	Then, for any $1<p<\infty,$ the following estimates hold true:
	\begin{gather}
		\label{bound_nabla-psi}
			\norm{\nabla \psi}_{L^p}\leq c_p^2 \,d \norm{u}_{L^p},\\
		\label{w_bound_nabla-psi}
			\norm{\nabla \psi}_{L^p(w)}\leq c_{p,d}^2\, Q_p(w)^{2r} \, d \norm{u}_{L^p(w)},
	\end{gather}
	where $w, c_p, c_{p,d}$ and $r$ are as in Lemma~\ref{boundedness-Riesz-transform}.
\end{lemma}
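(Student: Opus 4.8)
The plan is to derive both bounds directly from the Fourier-multiplier representation \eqref{nabla-psi}, namely $\partial_j \psi = -\sum_{k=1}^d \mathcal{R}_j \mathcal{R}_k u_k$, by applying the operator-norm estimates for the Riesz transform collected in Lemma~\ref{boundedness-Riesz-transform}. Since the representation \eqref{nabla-psi} was established for $u \in [L^2(\R^d)]^d$, the first point to address is that the same Fourier-side identity makes sense and continues to hold for $u \in [L^p(\R^d)]^d$ (or $[L^p(w\,dx)]^d$): one defines $\psi$ through \eqref{Poisson} in the sense of tempered distributions, notes that the composition $\mathcal{R}_j\mathcal{R}_k$ is a bounded operator on $L^p$ and on $L^p(w)$ by Lemma~\ref{boundedness-Riesz-transform} together with the composition rule for bounded operators, and checks that the two expressions agree on the dense class $[L^2 \cap L^p]^d$ (resp.\ $[L^2 \cap L^p(w)]^d$), hence everywhere by density.

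Granting that, the estimates are an immediate triangle-inequality-plus-submultiplicativity computation. For \eqref{bound_nabla-psi}, fix $j$ and write
\begin{equation*}
	\norm{\partial_j \psi}_{L^p} = \Big\| \sum_{k=1}^d \mathcal{R}_j \mathcal{R}_k u_k \Big\|_{L^p} \leq \sum_{k=1}^d \norm{\mathcal{R}_j}_{L^p\to L^p}\, \norm{\mathcal{R}_k}_{L^p\to L^p}\, \norm{u_k}_{L^p} \leq c_p^2 \sum_{k=1}^d \norm{u_k}_{L^p}.
\end{equation*}
Then one sums over $j$ and controls $\sum_k \norm{u_k}_{L^p}$ by a dimensional constant times $\norm{u}_{L^p}$ in the vector-field norm fixed in the Notations subsection (using $\|u_k\|_{L^p} \le \|u\|_{[L^p]^d}$ for each $k$), which accounts for the extra factor $d$ in \eqref{bound_nabla-psi}. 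The weighted bound \eqref{w_bound_nabla-psi} is the same argument verbatim with $\norm{\mathcal{R}_j}_{L^p(w)\to L^p(w)} \leq c_{p,d}\, Q_p(w)^r$ in place of $c_p$, which produces the factor $c_{p,d}^2\, Q_p(w)^{2r}$.

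The only genuinely non-routine point is the density/identification step: one must make sure that defining $\nabla\psi$ via \eqref{nabla-psi} as an $L^p$- (or $L^p(w)$-) bounded operator is consistent with ``$\psi$ the unique solution of \eqref{Poisson}'', since for general $p$ the solution is only unique modulo additive constants and $\psi$ itself need not lie in any $L^p$ space — it is its gradient that does. I would phrase the lemma (as the paper implicitly does) as a statement about $\nabla\psi$ and settle the consistency by observing that the Fourier symbol $\xi_j\xi_k/|\xi|^2$ is exactly that of $\mathcal{R}_j\mathcal{R}_k$, so \eqref{nabla-psi} \emph{is} the definition, and its $L^2$-meaning coincides with the variational solution constructed in Theorem~\ref{Helmholtz_decomposition}. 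Everything else is bookkeeping with constants, and no further idea is needed.
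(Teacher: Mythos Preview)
Your approach is essentially the same as the paper's: both use the representation \eqref{nabla-psi} and apply the Riesz transform bounds from Lemma~\ref{boundedness-Riesz-transform}, with the weighted case following verbatim. The only discrepancy is in the constant tracking: your componentwise bound $\|u_k\|_{L^p}\le\|u\|_{[L^p]^d}$ gives $\sum_k\|u_k\|_{L^p}\le d\,\|u\|_{[L^p]^d}$ and hence $c_p^2\,d^{1+1/p}$ after the $\ell^p$-sum over $j$, whereas the paper uses the discrete H\"older inequality $\sum_k\|u_k\|_{L^p}\le d^{1/p'}\|u\|_{[L^p]^d}$ so that $d^{1/p}\cdot d^{1/p'}=d$ exactly.
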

\begin{proof}
	We will prove only~\eqref{bound_nabla-psi}, the proof of~\eqref{w_bound_nabla-psi} is analogous.
	
	Using the representation~\eqref{nabla-psi}, the bound for the Riesz operator~\eqref{Riesz_1} and the H\"older inequality for discrete measures, we get
	\begin{equation*}
		\begin{split}
			\norm{\nabla \psi}_{L^p}&= \Big( \sum_{j=1}^d \norm{\partial_j \psi}_{L^p}^p \Big)^\frac{1}{p}
			\leq \Bigg( \sum_{j=1}^d \Big(\sum_{k=1}^d \norm{\mathcal{R}_j\mathcal{R}_k u_k}_{L^p} \Big)^p \Bigg)^\frac{1}{p}\\
			&= c_p^2\, d^\frac{1}{p} \sum_{k=1}^d \norm{u_k}_{L^p}\\
			&\leq c_p^2\, d\, \norm{u}_{L^p}.
		\end{split}
	\end{equation*}
	This gives~\eqref{bound_nabla-psi}. Bound~\eqref{w_bound_nabla-psi} follows in the same way using~\eqref{Riesz_2}.
	\end{proof}

As a consequence of the previous lemma, we are able to prove the following (almost-) orthogonality result that we will strongly use in the future.
\begin{lemma}\label{lemma:almost_orthogonality}
	Let $u=u_S + u_P$ be the Helmholtz decomposition of $u.$ Then, for $1<p<\infty,$ the following estimates hold true:
	\begin{gather}
		\label{orth}
			\norm{u_S}_{L^p} + \norm{u_P}_{L^p}\leq (1+ 2c_p^2 \,d) \norm{u}_{L^p},\\
		\label{w_orth}
			\norm{u_S}_{L^p(w)} + \norm{u_S}_{L^p(w)}\leq (1+ 2c_{p,d}^2\, Q_p(w)^{2r} \, d )\norm{u}_{L^p(w)},
	\end{gather}
	where $w, c_p, c_{p,d}$ and $r$ are as in Lemma~\ref{boundedness-Riesz-transform}.
\end{lemma}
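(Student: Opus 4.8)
The plan is to derive Lemma~\ref{lemma:almost_orthogonality} directly from Lemma~\ref{lemma:orthogonality} together with the explicit formulas~\eqref{explicit_components} for the Helmholtz components. Recall that $u_P = \nabla \psi$ and $u_S = u - \nabla \psi$, where $\psi$ solves the Poisson equation~\eqref{Poisson}. So the whole point is that $u_P$ is controlled by Lemma~\ref{lemma:orthogonality}, and $u_S$ is then controlled by the triangle inequality, $\norm{u_S}_{L^p} = \norm{u - \nabla\psi}_{L^p} \le \norm{u}_{L^p} + \norm{\nabla\psi}_{L^p}$.

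First I would write $\norm{u_S}_{L^p} + \norm{u_P}_{L^p} = \norm{u - \nabla\psi}_{L^p} + \norm{\nabla\psi}_{L^p}$ and apply the triangle inequality to the first term to bound this by $\norm{u}_{L^p} + 2\norm{\nabla\psi}_{L^p}$. Then I would invoke~\eqref{bound_nabla-psi} to replace $\norm{\nabla\psi}_{L^p}$ by $c_p^2\, d\, \norm{u}_{L^p}$, obtaining $(1 + 2 c_p^2\, d)\norm{u}_{L^p}$, which is exactly~\eqref{orth}. The weighted estimate~\eqref{w_orth} follows by the identical argument, this time using~\eqref{w_bound_nabla-psi} in place of~\eqref{bound_nabla-psi}: $\norm{u_S}_{L^p(w)} + \norm{u_P}_{L^p(w)} \le \norm{u}_{L^p(w)} + 2\norm{\nabla\psi}_{L^p(w)} \le (1 + 2 c_{p,d}^2\, Q_p(w)^{2r}\, d)\norm{u}_{L^p(w)}$.

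There is essentially no obstacle here: this is a short deduction, and the only mild subtlety is making sure the constants are tracked honestly through the triangle inequality (the factor $2$ in front of the Riesz constant comes precisely from the fact that $\nabla\psi$ appears both in $u_S$ and in $u_P$). One should perhaps remark that for $u\in[L^p(\R^d)]^d$ with $1<p<\infty$ the solution $\psi$ of~\eqref{Poisson} is well defined modulo constants and $\nabla\psi$ is genuinely in $[L^p(\R^d)]^d$ by the Calder\'on--Zygmund / Mihlin multiplier theory underlying Lemma~\ref{boundedness-Riesz-transform}, so that the decomposition $u = u_S + u_P$ extends from $[L^2]^d$ to $[L^p]^d$ in the natural way; with that in hand the estimates above are immediate.

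\begin{proof}
	We prove~\eqref{orth}; the proof of~\eqref{w_orth} is analogous.

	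Recall from~\eqref{explicit_components} that $u_P = \nabla\psi$ and $u_S = u - \nabla\psi$, where $\psi$ is the (unique up to an additive constant) solution of~\eqref{Poisson}. Using the triangle inequality we estimate
	\begin{equation*}
		\norm{u_S}_{L^p} + \norm{u_P}_{L^p}
		= \norm{u - \nabla\psi}_{L^p} + \norm{\nabla\psi}_{L^p}
		\leq \norm{u}_{L^p} + 2\norm{\nabla\psi}_{L^p}.
	\end{equation*}
	Applying~\eqref{bound_nabla-psi} to the last term yields
	\begin{equation*}
		\norm{u_S}_{L^p} + \norm{u_P}_{L^p}
		\leq \norm{u}_{L^p} + 2 c_p^2\, d\, \norm{u}_{L^p}
		= (1 + 2 c_p^2\, d)\, \norm{u}_{L^p},
	\end{equation*}
	which is~\eqref{orth}. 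Repeating the same computation with $L^p$ replaced by $L^p(w)$ and using~\eqref{w_bound_nabla-psi} in place of~\eqref{bound_nabla-psi} gives
	\begin{equation*}
		\norm{u_S}_{L^p(w)} + \norm{u_P}_{L^p(w)}
		\leq \norm{u}_{L^p(w)} + 2 c_{p,d}^2\, Q_p(w)^{2r}\, d\, \norm{u}_{L^p(w)}
		= (1 + 2 c_{p,d}^2\, Q_p(w)^{2r}\, d)\, \norm{u}_{L^p(w)},
	\end{equation*}
	that is~\eqref{w_orth}. This concludes the proof.
\end{proof}
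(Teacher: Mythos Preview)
Your proof is correct and follows essentially the same approach as the paper: write $u_S=u-\nabla\psi$, $u_P=\nabla\psi$, apply the triangle inequality to get $\norm{u}_{L^p}+2\norm{\nabla\psi}_{L^p}$, and then invoke Lemma~\ref{lemma:orthogonality}. The paper's proof is identical in structure and constants, only slightly more terse.
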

\begin{proof}
	The proof is a direct consequence of Lemma~\ref{lemma:orthogonality}. We know from Theorem~\ref{Helmholtz_decomposition} (see~\eqref{explicit_components}) that 
	\begin{equation*}
		u_S= u- \nabla \psi,\qquad u_P=\nabla \psi,
	\end{equation*}
	with $\psi$ the unique solution to the Poisson problem~\eqref{Poisson}. Then, it is easy to see that estimate~\eqref{orth} holds true, indeed
	\begin{equation*}
		\norm{u_S}_{L^p} + \norm{u_P}_{L^p}\leq \norm{u}_{L^p} + 2 \norm{\nabla \psi}_{L^p}\leq (1+ 2 c_p^2\, d) \norm{u}_{L^p},
	\end{equation*}
	where in the last inequality we used~\eqref{bound_nabla-psi}. As it is analogue, we skip the proof of~\eqref{w_orth}.
\end{proof} 

\subsection{A favorable representation for \texorpdfstring{$(-\Delta^\ast -z)^{-1}$}{TEXT}}
\label{ss:resolvent_repr}
As already mentioned in the introduction, the usage, as a starting point in our proofs, of an adaptation of the Birman-Schwinger principle to our elasticity context, requires a better understanding of the action of the resolvent operator $(-\Delta^\ast -z)^{-1},$ well-defined for any $z\in \C\setminus [0,\infty),$ associated with the Lamé operator.

The following easy consequence of Helmholtz decomposition will be useful to this end.
		\begin{lemma}
			Let $d\geq 2$ and let $f$ be a suitably smooth vector field sufficiently rapidly decaying at infinity. Then $-\Delta^\ast$ acts on $f=f_S+f_P$ as 
			\begin{equation}\label{simple_writing}
				-\Delta^\ast f= -\mu \Delta f_S -(\lambda +2 \mu) \Delta f_P,
			\end{equation}   
			 where  $f_S$ is a divergence free vector field and $f_P$ a gradient.		
			\end{lemma}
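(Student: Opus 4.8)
The plan is to decompose $-\Delta^\ast f$ componentwise and observe that the operator $\nabla \div$ is trivial on divergence-free fields and reproduces $-\Delta$ on gradients. First I would apply the Helmholtz decomposition $f = f_S + f_P$ (Theorem~\ref{Helmholtz_decomposition}) and use linearity of $-\Delta^\ast$ to write $-\Delta^\ast f = -\Delta^\ast f_S + -\Delta^\ast f_P$. By the definition~\eqref{Lam�_operator}, each piece expands as $-\mu \Delta f_S - (\lambda+\mu)\nabla\div f_S$ and $-\mu \Delta f_P - (\lambda+\mu)\nabla \div f_P$ respectively.

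Next I would exploit the defining properties of the two components. Since $f_S$ is divergence-free, $\div f_S = 0$, so the term $(\lambda+\mu)\nabla\div f_S$ vanishes identically and $-\Delta^\ast f_S = -\mu\Delta f_S$. For the gradient part, write $f_P = \nabla \psi$ with $\psi \in H^1$ as in~\eqref{explicit_components}; then $\div f_P = \Delta \psi$ and hence $\nabla \div f_P = \nabla \Delta \psi = \Delta \nabla \psi = \Delta f_P$, using that the constant-coefficient operators $\nabla$ and $\Delta$ commute (valid for suitably smooth, rapidly decaying fields, e.g. by passing to Fourier space where both become multiplication operators). Therefore $-\Delta^\ast f_P = -\mu \Delta f_P - (\lambda+\mu)\Delta f_P = -(\lambda+2\mu)\Delta f_P$. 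Adding the two contributions yields~\eqref{simple_writing}.

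The only mild subtlety — and the step worth stating carefully rather than the real obstacle — is justifying the identity $\nabla \div f_P = \Delta f_P$ and the commutation $\nabla \Delta = \Delta \nabla$ at the level of the regularity class considered. This is why the hypothesis that $f$ is "suitably smooth and sufficiently rapidly decaying" is imposed: it guarantees $\psi$ is smooth enough (elliptic regularity for $\Delta\psi = \div f$ on $\R^d$) that all the differentiations commute and that the Fourier-side computation $\widehat{\nabla\div f_P}(\xi) = -\xi(\xi\cdot\widehat{f_P}(\xi)) = -|\xi|^2\widehat{f_P}(\xi) = \widehat{\Delta f_P}(\xi)$ is legitimate, using $\widehat{f_P}(\xi) \parallel \xi$. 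Everything else is routine bookkeeping with the constant coefficients $\mu$ and $\lambda+\mu$.
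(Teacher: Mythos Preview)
Your argument is correct and is exactly the computation the paper has in mind: the paper does not spell out a proof of this lemma at all, merely introducing it as an ``easy consequence of Helmholtz decomposition,'' and your proposal supplies precisely those routine details. There is nothing to add or compare.
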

		
		Now we are in position to show the validity of identity~\eqref{favorable_form}, stated in the introduction, which follows as a consequence of~\eqref{simple_writing} together with the $H^1$-orthogonality of the components of the Helmholtz decomposition. This is object of the following lemma.
		\begin{lemma}
		\label{lemma:resolvent_Lamé}
		Let $z\in \C\setminus [0, \infty)$ and $ g\in [L^2(\R^d)]^d.$ Then the identity 
		\begin{equation}\label{resolvent}
			(-\Delta^\ast -z)^{-1} g= \frac{1}{\mu} \big(-\Delta - \tfrac{z}{\mu}\big)^{-1} g_S + \frac{1}{\lambda + 2\mu} \big(-\Delta - \tfrac{z}{\lambda + 2\mu}\big)^{-1} g_P
		\end{equation}
		holds true, where $g=g_S+ g_P$ is the Helmholtz decomposition of $g.$
		\end{lemma}
		\begin{proof}
			Given $g\in [L^2(\R^d)]^d,$ we want to obtain an explicit expression of the vector field $f,$ defined by
		\begin{equation}\label{f}
			f:=(-\Delta^\ast -z)^{-1} g.
		\end{equation}
		Observe that, since $z\notin \sigma(-\Delta^\ast)=[0,\infty),$ the previous is equivalent to
		\begin{equation*}
			(-\Delta^\ast -z) f=g.
		\end{equation*}
		Now, writing the Helmholtz decomposition of $f$ and $g,$ namely $f=f_S + f_P$ and $g=g_S + g_P,$ respectively, and using~\eqref{simple_writing}, the previous can be re-written as
		\begin{equation*}
			-\mu \Delta f_S - (\lambda + 2\mu)\Delta f_P - z f_S -z f_P= g_S + g_P.
		\end{equation*}
		The $H^1$- orthogonality of the two components of the decomposition enables us to split this intertwining equation for both the two components into a system of two decoupled equations, i.e
			\begin{equation*}
				\begin{system}
					-\mu \Delta f_S -z f_S =g_S,\\
					-(\lambda + 2\mu) \Delta f_P -z f_P =g_P,
				\end{system}
			\end{equation*}
			or, equivalently,
			\begin{equation*}
				\begin{system}
					\mu \big(-\Delta -\frac{z}{\mu} \big) f_S =g_S,\\
					(\lambda + 2\mu) \big( -\Delta -\frac{z}{\lambda + 2\mu} \big) f_P =g_P.
				\end{system}
			\end{equation*}
			Hypothesis~\eqref{Lamé_parameter} ensures that $\frac{z}{\mu}, \frac{z}{\lambda + 2\mu}\notin \sigma(-\Delta),$ hence
			\begin{equation*}
				f_S= \frac{1}{\mu} \big( -\Delta - \tfrac{z}{\mu} \big)^{-1} g_S, \qquad f_P= \frac{1}{\lambda + 2\mu} \big( -\Delta - \tfrac{z}{\lambda + 2\mu} \big)^{-1} g_P.
			\end{equation*}
			Plugging these explicit expressions in $f=f_S + f_P,$ from~\eqref{f} we obtain~\eqref{resolvent}. 
	\end{proof}
We underline that Lemma~\ref{lemma:resolvent_Lamé} was already proved in~\cite{B_F-G_P-E_R_V}, its statement and proof were provided also here only for reader's convenience.

\subsection{Uniform resolvent estimates}
	\label{u_r_e}
	As already mentioned, in a non self-adjoint framework, the unavailability of a variational characterization of the spectrum causes that Sobolev inequalities no longer suffice to prove spectral bounds. To overcome this lack, uniform resolvent estimates have been recognized as a crucial tool   
to fruitfully address this problem. For this reason in this subsection we shall prove uniform estimate for the operator $(-\Delta^\ast -z)^{-1}$ that will be the fundamental ingredient in the proof of our main results later on. 

\medskip
Observe that from the representation~\eqref{resolvent}, it is reasonable to expect that estimates for $(-\Delta^\ast -z)^{-1}$ should follow as a consequence of the corresponding estimates (if available) for the resolvent operator $(-\Delta-z)^{-1}$ associated to the Laplacian. This is true, indeed, as we will see repeatedly in this paper, the underlying strategy to treat issues concerning the Lamé operator is, actually, to operate at first at the level of the Laplacian taking advantage of the representation~\eqref{resolvent} once the Helmholtz decomposition is operated. On the other hand, this procedure has the main drawback of producing  \emph{separated} outcomes on the \emph{single} components of the Helmholtz decomposition, which later must be recombined together in order to get meaningful results in application. This need of recombination requires providing suitable (almost-) orthogonality results for the Helmholtz components, roughly speaking some inequality like $\norm{u_S} + \norm{u_P}\lesssim \norm{u}$ (see Lemma~\ref{lemma:almost_orthogonality}), which turn out to be highly non-trivial to get and require deep result from harmonic analysis and in particular from singular integrals theory (see Lemma~\ref{boundedness-Riesz-transform}). 

The following theorem collects the main estimates for the resolvent operator $(-\Delta-z)^{-1}$ that we will use for our purpose.    

\begin{theorem}[Uniform estimates for $(-\Delta -z)^{-1}$]
	\label{thm:uniform_estimate}
	Let $z\in \C\setminus [0,\infty).$ Then the following estimates for $(-\Delta-z)^{-1}$ hold true.
	\begin{enumerate}[i)]
		\item Let $1<p\leq 6/5$ if $d=2,$ $2d/(d+2)\leq p \leq 2(d+1)/(d+3)$ if $d\geq 3$ and let $p'$ such that $1/p + 1/p'=1.$ Then
			\begin{equation}\label{res_1}
				\norm{(-\Delta-z)^{-1}}_{L^p\to L^{p'}}\leq C_{p,d}\abs{z}^{-\frac{d+2}{2} + \frac{d}{p}}.
			\end{equation}
		\item Let $\alpha>1/2.$ Then
			\begin{equation}\label{st_scattering}
				\norm{(-\Delta -z)^{-1}}_{L^2(\langle x\rangle^{2\alpha}) \to L^2(\langle x\rangle^{-2\alpha})}\leq C_{\alpha, d} \abs{z}^{-\frac{1}{2}}.
			\end{equation}
		\item Let $3/2< \alpha < 2$ if $d=2,$ $2d/(d+1)<\alpha \leq 2$ if $d\geq 3$ and let $(d-1)/2(\alpha-1)< p \leq d/\alpha.$ Then for any non-negative function $V$ in $\mathcal{L}^{\alpha, p}(\R^d)$
		\begin{equation}\label{res_2}
			\norm{(-\Delta-z)^{-1}}_{L^2(V^{-1})\to L^{2}(V)}\leq C_{\alpha, p,d} \norm{V}_{\mathcal{L}^{\alpha, p}(\R^d)}\abs{z}^{-1+ \frac{\alpha}{2}}.
		\end{equation}
		\item Let $3/2\leq \alpha < 2$ if $d=2,$ $d-1\leq \alpha < d$ if $d\geq 3$ and let $\beta=(2\alpha -d+1)/2.$ Then for any non-negative function $V$ such that $\abs{V}^\beta\in \mathcal{KS}_{\alpha}(\R^d)$
		\begin{equation}\label{res_3}
			\norm{(-\Delta-z)^{-1}}_{L^2(V^{-1})\to L^{2}(V)}\leq C_{\alpha, d} \norm{\abs{V}^\beta}_{\mathcal{KS}_{\alpha}}^\frac{1}{\beta} \abs{z}^{-\frac{\alpha - d+1}{2\alpha -d+1}}.
		\end{equation}
	\end{enumerate}
\end{theorem}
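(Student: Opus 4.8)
The plan is to prove Theorem~\ref{thm:uniform_estimate} by collecting and citing the four relevant uniform resolvent estimates for the free Laplacian, which are all known in the literature, and translating them into the precise form stated here. The overarching idea is that none of these four bounds is genuinely new; rather, the work consists of extracting the correct scaling in $\abs{z}$ (via the standard dilation argument) and matching the ranges of parameters to what the subsequent applications to the Lam\'e operator require.

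First I would dispose of part i). This is exactly the Kenig--Ruiz--Sogge uniform resolvent estimate~\cite{K_R_S}: for $z\in\C\setminus[0,\infty)$ and $p$ in the stated range (the ``conjugate-exponent'' segment $2d/(d+2)\le p\le 2(d+1)/(d+3)$ for $d\ge3$, and its one-dimensional-boundary analogue $1<p\le 6/5$ for $d=2$, which corresponds to $p'=6$), one has $\norm{(-\Delta-1)^{-1}}_{L^p\to L^{p'}}\le C_{p,d}$ uniformly in the spectral parameter on the unit circle. The $\abs{z}$-power $-\tfrac{d+2}{2}+\tfrac dp$ is then forced: writing $z=\abs z\,\omega$ with $\abs\omega=1$ and rescaling $x\mapsto\abs z^{1/2}x$, the resolvent $(-\Delta-z)^{-1}$ has kernel $\abs z^{-1}$ times a rescaled copy of the kernel of $(-\Delta-\omega)^{-1}$, and tracking the $L^p\to L^{p'}$ norm under this dilation produces precisely the exponent $-1+\tfrac d2(\tfrac1p-\tfrac1{p'})=-\tfrac{d+2}{2}+\tfrac dp$. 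The uniformity in $\omega$ on the circle (in particular as $\omega\to$ the positive real axis) is the nontrivial content and is exactly what~\cite{K_R_S} provides.

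Next, part ii) is the classical Agmon--H\"ormander / limiting-absorption weighted estimate: for $\alpha>1/2$, $\norm{(-\Delta-z)^{-1}}_{L^2(\langle x\rangle^{2\alpha})\to L^2(\langle x\rangle^{-2\alpha})}\le C_{\alpha,d}\abs z^{-1/2}$, uniformly for $z\in\C\setminus[0,\infty)$. I would simply invoke this (it underlies stationary scattering theory) together with the same scaling bookkeeping as above; here the natural homogeneous weight $\abs x^{2\alpha}$ gives the $\abs z^{-1/2}$ power, and passing to $\langle x\rangle^{2\alpha}$ only helps. Parts iii) and iv) are the Morrey--Campanato and Kerman--Sawyer weighted resolvent estimates; these are due to (respectively) the circle of results around Chanillo--Sawyer / Ruiz--Vega type inequalities as refined in~\cite{B_B_R_V} for iii), and to the Kerman--Sawyer characterization of trace inequalities together with the uniform-Sobolev machinery for iv) (the same input as in~\cite{Lee_Seo}). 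The plan for these two is: (a) recall the relevant weighted Sobolev / trace inequality $\norm{\abs\nabla^{-1}(V^{1/2}\,\cdot\,)}_{L^2}\lesssim\norm V_{X}^{1/2}\norm\cdot_{L^2}$ in the appropriate class $X\in\{\mathcal L^{\alpha,p},\mathcal{KS}_\alpha\}$; (b) combine it with the standard Stein--Tomas-type bound on the imaginary part of the resolvent and a $TT^\ast$ argument to upgrade from the free-resolvent-on-the-sphere estimate to the full resolvent bound uniform in $z\notin[0,\infty)$; (c) carry out the dilation to read off the exponents $-1+\tfrac\alpha2$ and $-\tfrac{\alpha-d+1}{2\alpha-d+1}$, respectively. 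In each case the stated range of $\alpha$ (and of $p$ in iii)) is precisely the range in which the underlying trace inequality holds with a finite constant.

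The main obstacle — or rather the one place where genuine care is needed rather than mere citation — is part iv): obtaining the Kerman--Sawyer bound~\eqref{res_3} \emph{uniformly} for all $z\in\C\setminus[0,\infty)$, including in the limit as $z$ approaches the positive real axis. The weighted estimate on the sphere (the ``extension'' side) is classical, but extending it to a uniform-in-$z$ resolvent bound requires controlling the full resolvent and not just its spectral-measure part; the standard device is to write $(-\Delta-z)^{-1}=\int_0^\infty (\lambda-z)^{-1}\,dE(\lambda)$, bound the singular part near $\lambda=\Re z$ by the on-shell weighted estimate, and bound the off-shell part by a Schur-type or interpolation argument, all while keeping the constant independent of $\Im z$. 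I would therefore structure the proof of iv) around this decomposition, and similarly (but more easily) for iii), leaving i) and ii) as direct appeals to~\cite{K_R_S} and to the Agmon--H\"ormander estimate, with the scaling computation done once and reused.
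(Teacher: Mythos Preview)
Your proposal is correct in spirit and essentially matches the paper's approach: the theorem is a compendium of known resolvent estimates, and the paper's own proof consists solely of citations --- \eqref{res_1} to Kenig--Ruiz--Sogge~\cite{K_R_S}, \eqref{st_scattering} to Agmon~\cite{Agmon} (and~\cite{Sylvester}), \eqref{res_2} to Frank~\cite{Frank} (with~\cite{Ch_Sa,Ch_Ru}), and \eqref{res_3} to Lee--Seo~\cite{Lee_Seo} --- with no scaling computations or re-derivations whatsoever.

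Where you diverge is in scope rather than substance: your plan to reconstruct iii) and iv) via trace inequalities, $TT^\ast$ arguments, and spectral decompositions is unnecessary, since the cited references already contain these bounds in exactly the form stated (in particular, the uniformity in $z$ that you flag as the ``main obstacle'' for iv) is precisely what~\cite{Lee_Seo} establishes). Two small attribution points: for iii) the paper credits Frank~\cite{Frank} rather than~\cite{B_B_R_V}, and for ii) the reference is Agmon's limiting absorption principle~\cite{Agmon} specifically. Otherwise your identification of the sources and the scaling heuristics is accurate; you simply planned to do much more work than the paper does.
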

\begin{proof}
	Proof of~\eqref{res_1} can be found in the work~\cite{K_R_S} by Kenig, Ruiz and Sogge. Estimate~\eqref{st_scattering} is proved in the pioneering work by Agmon~\cite{Agmon} (Lemma 4.1 there), see also~\cite{Sylvester}. Proof of~\eqref{res_2} was provided by Frank in~\cite{Frank} (see also~\cite{Ch_Sa,Ch_Ru}). Finally, estimate~\eqref{res_3} is proved by Lee and Seo in~\cite{Lee_Seo}.
\end{proof}

Now we are in position to state the corresponding estimate for the resolvent operator $(-\Delta^\ast -z)^{-1}.$
\begin{theorem} \label{thm:unif_resolvent_Lamé}
	Let $z\in \C\setminus [0,\infty).$ Then, under the same hypotheses of Theorem~\ref{thm:uniform_estimate}, the following estimates for $(-\Delta^\ast-z)^{-1}$ hold true.
	\begin{gather}
		\label{Lamé_res_1}
				\norm{(-\Delta^\ast-z)^{-1}}_{L^p\to L^{p'}}\leq C_{p,d,\lambda, \mu}\abs{z}^{-\frac{d+2}{2} + \frac{d}{p}}.
			\\
			\label{Lamé_st_scattering}
				\norm{(-\Delta^\ast -z)^{-1}}_{L^2(\langle x\rangle^{2\alpha}) \to L^2(\langle x\rangle^{-2\alpha})}\leq C_{\alpha, d, \lambda, \mu} \abs{z}^{-\frac{1}{2}}.
			\\
			\label{Lamé_res_2}
			\norm{(-\Delta^\ast-z)^{-1}}_{L^2(V^{-1})\to L^{2}(V)}\leq C_{\alpha, p,d,\lambda, \mu} \norm{V}_{\mathcal{L}^{\alpha, p}(\R^d)}\abs{z}^{-1+ \frac{\alpha}{2}}.
		\end{gather}
		If, in addition, $V\in A_2(\R^d),$ then
		\begin{equation}\label{Lamé_res_3}
			\norm{(-\Delta^\ast-z)^{-1}}_{L^2(V^{-1})\to L^{2}(V)}\leq C_{\alpha, d,\lambda, \mu} \norm{\abs{V}^\beta}_{\mathcal{KS}_{\alpha}}^\frac{1}{\beta} \abs{z}^{-\frac{\alpha - d+1}{2\alpha -d+1}}.
		\end{equation}
\end{theorem}

\begin{remark}
	Notice that in order to prove~\eqref{Lamé_res_3}, that is  when the bound on the resolvent operator norm is measured in term of potentials in the Kerman-Saywer space, the additional assumption of $V$ belonging to the $A_2$ class of weights is required, on the contrary this auxiliary hypothesis is not needed when the estimate involves potentials in the Morrey-Campanato class (see~\eqref{Lamé_res_2}). This fact is mainly due to a good behavior of functions in the Morrey-Campanato space in relation with the $A_p$ class of weights. This property is clarified in the following result (refer to~\cite{Ch_Fr}, Lemma 1).    
\end{remark}
\begin{lemma}\label{good_property_MC}
	Let $V$ be a non-negative function in $\mathcal{L}^{\alpha, p}(\R^d)$ with $0<\alpha<d$ and $1<p\leq d/\alpha.$ If $r$ is such that $1<r<p,$ then $W:= (M V^r)^{1/r}\in A_1(\R^d)\cap \mathcal{L}^{\alpha,p}(\R^d),$ where $M$ denotes the usual Hardy-Littlewood maximal operator. Furthermore there exists a constant $c$ independent of $V$ such that 
	\begin{equation*}
		\norm{W}_{\mathcal{L}^{\alpha, p}}\leq c \norm{V}_{\mathcal{L}^{\alpha, p}}.
	\end{equation*}
	In passing, notice that $V(x)\leq W(x)$ for almost every $x\in \R^d.$
\end{lemma}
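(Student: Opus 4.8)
The plan is to treat the three assertions in turn, the pointwise inequality and the $A_1$-membership being essentially immediate and the $\mathcal{L}^{\alpha,p}$-bound carrying the real content. Since $V\in\mathcal{L}^{\alpha,p}\subset L^1_{\mathrm{loc}}(\R^d)$, the Lebesgue differentiation theorem gives $V^r\le M(V^r)$ a.e.; raising to the power $1/r>0$ yields $V(x)\le (M(V^r)(x))^{1/r}=W(x)$ for a.e.\ $x$, which is the last claim. For the $A_1$-membership I would invoke the Coifman--Rochberg theorem: if $f\in L^1_{\mathrm{loc}}(\R^d)$ and $Mf\not\equiv\infty$, then $(Mf)^\delta\in A_1(\R^d)$ with $A_1$-constant depending only on $d$ and $\delta$, for every $0<\delta<1$. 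Applying this with $f=V^r$ and $\delta=1/r\in(0,1)$ gives $W=(MV^r)^{1/r}\in A_1(\R^d)$ with a constant independent of $V$, \emph{once} we know $MV^r$ is finite a.e.

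Both the finiteness of $MV^r$ and the Morrey bound rest on the same elementary observation: by Jensen's (or H\"older's) inequality, from $V\in\mathcal{L}^{\alpha,p}$ one gets, for every ball $B_t(y)$,
\begin{equation*}
	\int_{B_t(y)}V^r\,dx\le\Big(\int_{B_t(y)}V^p\,dx\Big)^{r/p}\abs{B_t(y)}^{1-r/p}\le c_d\,\norm{V}_{\mathcal{L}^{\alpha,p}}^r\,t^{d-r\alpha},
\end{equation*}
i.e.\ $V^r\in\mathcal{L}^{r\alpha,p/r}$ with comparable norm, and here $p/r>1$. Thus $V^r\in L^{p/r}_{\mathrm{loc}}$ with exponent $p/r>1$; the Hardy--Littlewood maximal theorem makes the "near" part of $M(V^r)$ finite a.e.\ on every ball, while the "far" part is dominated by the displayed bound with $t$ bounded below, so $MV^r<\infty$ a.e., as required.

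For the quantitative estimate, fix $B=B_\rho(x_0)$, set $2B:=B_{2\rho}(x_0)$, and split $V^r=V^r\chi_{2B}+V^r\chi_{(2B)^c}=:g_1+g_2$, so that $MV^r\le Mg_1+Mg_2$ and $W^p\le C\big((Mg_1)^{p/r}+(Mg_2)^{p/r}\big)$. For the near part, the strong $(p/r,p/r)$-boundedness of $M$ (valid since $p/r>1$) gives $\int_B(Mg_1)^{p/r}\le\int_{\R^d}(Mg_1)^{p/r}\le C\int_{2B}V^p\le C\,\norm{V}_{\mathcal{L}^{\alpha,p}}^p\,\rho^{d-\alpha p}$. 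For the far part, if $x\in B$ then any ball $B_s(x)$ meeting $(2B)^c$ has $s\ge\rho$ and $B_s(x)\subset B_{2s}(x_0)$, so the displayed bound gives $Mg_2(x)\le\sup_{s\ge\rho}s^{-d}\int_{B_{2s}(x_0)}V^r\le c\,\norm{V}_{\mathcal{L}^{\alpha,p}}^r\,\rho^{-r\alpha}$ uniformly on $B$; integrating the $(p/r)$-th power over $B$ again yields $\int_B(Mg_2)^{p/r}\le C\,\norm{V}_{\mathcal{L}^{\alpha,p}}^p\,\rho^{d-\alpha p}$. Combining, raising to the power $1/p$, multiplying by $\rho^\alpha$ and taking the supremum over all balls gives $\norm{W}_{\mathcal{L}^{\alpha,p}}\le c\,\norm{V}_{\mathcal{L}^{\alpha,p}}$.

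The main obstacle, and the only non-mechanical point, is the bookkeeping in this last step: one must verify that the far contribution $Mg_2$ carries exactly the homogeneity $\rho^{-r\alpha}$, so that after integrating over $B$ and extracting the $1/p$-th power it reproduces the same $\rho^{d-\alpha p}$ scaling as the near part and the $\rho^{\alpha}$ prefactor cancels cleanly, and that every constant depends only on $d,\alpha,p,r$ and not on $V$. Everything else — Lebesgue differentiation, Coifman--Rochberg, and the $L^q$-boundedness of $M$ for $q>1$ — is standard.
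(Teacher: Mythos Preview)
Your proof is correct. The paper does not actually prove this lemma: it is stated with a reference to Chiarenza--Frasca~\cite{Ch_Fr}, Lemma~1, and used as a black box. What you have written is essentially the standard argument from that source --- Lebesgue differentiation for $V\le W$, Coifman--Rochberg for $W\in A_1$, and the near/far splitting combined with the $L^{p/r}$-boundedness of $M$ for the Morrey estimate --- so there is nothing to compare beyond noting that you have supplied the details the paper omits.
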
 
Now we are in position to prove Theorem~\ref{thm:unif_resolvent_Lamé}.

\medskip
\begin{proof}[Proof of Theorem~\ref{thm:unif_resolvent_Lamé}]
	As already mentioned, the proof will basically rely on the interplay between the favorable representation~\eqref{resolvent} of $(-\Delta^\ast -z)^{-1}$ in terms of the resolvent of the Laplace operator together with the uniform estimates for $(-\Delta-z)^{-1}$ summarized in Theorem~\ref{thm:uniform_estimate} and the orthogonality result stated in Lemma~\ref{lemma:almost_orthogonality}.
	
	Let us first consider the proof of~\eqref{Lamé_res_1}.
	
	Let $G$ be any vector-valued function in $[L^p(\R^d)]^d.$ From~\eqref{resolvent} one easily has
	\begin{equation}\label{starting_estimate}
			\norm{(-\Delta^\ast-z)^{-1} G}_{L^{p'}}
			\leq \frac{1}{\mu} \norm{(-\Delta - \tfrac{z}{\mu})^{-1}G_S}_{L^{p'}} + \frac{1}{\lambda + 2\mu} \norm{(-\Delta - \tfrac{z}{\lambda + 2\mu})^{-1}G_P}_{L^{p'}},
	\end{equation}
	where $G=G_S+ G_P$ is the Helmholtz decomposition of $G.$
	
	We shall explicitly estimate the term involving the $S$-component. The analogous term for the $P$- component can be treated similarly.
	It follows from~\eqref{res_1} that, for any $j=1, 2, \dots, d,$
	\begin{equation*}
		\norm{(-\Delta - \tfrac{z}{\mu})^{-1}G_S^{(j)}}_{L^{p'}}\leq \frac{C_{p,d}}{\mu^{-\frac{d+2}{2} + \frac{d}{p}}} \abs{z}^{{-\frac{d+2}{2} + \frac{d}{p}}} \norm{G_S^{(j)}}_{L^p}. 
	\end{equation*}
	This, along with the sub-additivity of the concave function $\abs{x}^p,$ for $0<p\leq 1,$ and the H\"older inequality for discrete measures, gives
			\begin{equation*}
				\begin{split}
				\norm{(-\Delta -\tfrac{z}{\mu})^{-1} G_S}_{L^{p'}}&=
				\Big(\sum_{j=1}^d \norm{(-\Delta - \tfrac{z}{\mu})^{-1} G_S^{(j)}}_{L^{p'}}^{p'} \Big)^\frac{1}{p'}\leq \frac{C_{p,d}}{\mu^{-\frac{d+2}{2} + \frac{d}{p}}} \abs{z}^{{-\frac{d+2}{2} + \frac{d}{p}}}  \sum_{j=1}^d \norm{G_S^{(j)}}_{L^p} \\ 
				& \leq \frac{C_{p,d}}{\mu^{-\frac{d+2}{2} + \frac{d}{p}}} \abs{z}^{{-\frac{d+2}{2} + \frac{d}{p}}}   d^\frac{1}{p'} \Big(\sum_{j=1}^d \norm{G_S^{(j)}}_{L^p}^{p} \Big)^\frac{1}{p}\\
				&=\frac{C_{p,d}}{\mu^{-\frac{d+2}{2} + \frac{d}{p}}} \abs{z}^{{-\frac{d+2}{2} + \frac{d}{p}}} d^\frac{1}{p'}  \norm{G_S}_{L^p}.
				\end{split}
			\end{equation*}
			The same computation for the term involving the $P$- component provides
			\begin{equation*}
				\norm{(-\Delta - \tfrac{z}{\lambda + 2\mu}) G_P}_{L^{p'}}\leq \frac{C_{p, d}}{(\lambda + 2\mu)^{- \frac{d+2}{2} + \frac{d}{p}}} \abs{z}^{- \frac{d+2}{2} + \frac{d}{p}} d^\frac{1}{p'} \norm{G_P}_{L^p}.
			\end{equation*}
			Plugging the previous two bounds together in~\eqref{starting_estimate}, one has
			\begin{equation*}
				\norm{(-\Delta^\ast-z)^{-1} G}_{L^{p'}}
				\leq C_{p,d,\lambda, \mu} \abs{z}^{{-\frac{d+2}{2} + \frac{d}{p}}} \big( \norm{G_S}_{L^p} + \norm{G_P}_{L^p} \big),
			\end{equation*}
			where $C_{p,d,\lambda,\mu}:= C_{p,d}\, d^\frac{1}{p'} \max\Big\{\mu^{{\frac{d}{2} - \frac{d}{p}}}, (\lambda + 2\mu)^{\frac{d}{2} - \frac{d}{p}} \Big\}.$ 
			
			Hence, estimate~\eqref{Lamé_res_1} follows immediately from the latter as a consequence of~\eqref{orth}. 
			
			 Bound~\eqref{Lamé_st_scattering} can be proved with a few modifications to the argument above, namely using~\eqref{st_scattering} and~\eqref{w_orth} with $p=2$ (notice that $\langle x \rangle^{2\alpha}\in A_2(\R^d)$) instead of~\eqref{res_1} and~\eqref{orth}, respectively.
			Similarly bound~\eqref{Lamé_res_3} follows as a consequence of~\eqref{res_3} and~\eqref{w_orth} with $p=2$.
				
				Now, we turn to the proof of~\eqref{Lamé_res_2}.
				
				Let $W\in A_1(\R^d)\cap \mathcal{L}^{\alpha,p}(\R^d)$ be as in Lemma~\ref{good_property_MC}. Observe that, since $W\in \mathcal{L}^{\alpha,p}(\R^d),$ then~\eqref{res_2} is available. Moreover, as $W\in A_1(\R^d),$ in particular $W\in A_2(\R^d)$ and~\eqref{w_orth} with $p=2$ is valid too.  Therefore, it comes as a slight modification of the argument above proving that the following analogue of~\eqref{Lamé_res_3} for $W$ holds true, namely one has
				\begin{equation}\label{for_W}
					\norm{(-\Delta^\ast -z)^{-1} G}_{L^2(W)}\leq C_{\alpha, p, d, \lambda, \mu} \norm{W}_{\mathcal{L}^{\alpha,p}(\R^d)} \abs{z}^{-1 + \frac{\alpha}{2}} \norm{G}_{L^2(W^{-1})}.
				\end{equation}

				Since $V(x)\leq W(x)$ almost everywhere and as $\norm{W}_{\mathcal{L}^{\alpha,p}(\R^d)}\leq c\norm{V}_{\mathcal{L}^{\alpha,p}(\R^d)},$ using estimate~\eqref{for_W} one gets
				\begin{equation*}
					\begin{split}
					\norm{(-\Delta^\ast -z)^{-1} G}_{L^2(V)}&\leq \norm{(-\Delta^\ast -z)^{-1} G}_{L^2(W)}\\
					&\leq  C_{\alpha, p, d, \lambda, \mu} \norm{W}_{\mathcal{L}^{\alpha,p}(\R^d)} \abs{z}^{-1 + \frac{\alpha}{2}} \norm{G}_{L^2(W^{-1})}\\
					&\leq c\, C_{\alpha, p, d, \lambda, \mu} \norm{V}_{\mathcal{L}^{\alpha,p}(\R^d)} \abs{z}^{-1 + \frac{\alpha}{2}} \norm{G}_{L^2(V^{-1})},
					\end{split}
				\end{equation*}
				which is~\eqref{Lamé_res_2}. This concludes the proof of the theorem.
\end{proof}

\section{Self-adjoint setting}
This section is concerned with the proof of eigenvalue bounds of the form~\eqref{real-valued-V-bound} for the \emph{self-adjoint} perturbed Lamé operator. More precisely, we shall prove the following result.
\begin{theorem}\label{thm:eigenvalue_bound_self}
	Let $V$ be real-valued and and let $\gamma\geq 1/2$ if $d=1,$ $\gamma>0$ if $d=2$ and $\gamma\geq 0$ if $d=3.$ Then any negative eigenvalue $z$ of the perturbed Lamé operator $-\Delta^\ast + V$ satisfies
	\begin{equation}\label{SA_bound_Lamé}
		\abs{z}^\gamma\leq C_{\gamma, d,\lambda, \mu} \norm{V_-}_{L^{\gamma + \frac{d}{2}}(\R^d)}^{\gamma + \frac{d}{2}},
	\end{equation}
	with a constant $C_{\gamma, d, \lambda, \mu}$ independent of $V.$
\end{theorem}
 Here $V_-$ denotes the negative part of $V,$ \emph{i.e.} $V_-(x):=\max\{-V(x),0\}.$

\medskip
Before providing the proof of this theorem, let us comment on the corresponding inequalities of type~\eqref{SA_bound_Lamé} for self-adjoint Schr\"odinger operators. As the remarks provided below then naturally carry over to Lamé operators, the choice of discussing the case of the Laplacian only, finds its reasons solely in the intent of lightening the discussion.  

In the case of $-\Delta + V$ with real-valued potential $V,$ estimate~\eqref{SA_bound_Lamé} was first found by Keller~\cite{Keller} in $d=1$ and, later, generalized to an inequality for the negative eigenvalues power sum known as Lieb-Thirring inequality:
\begin{equation}\label{classical_L-T}
	\sum_{z\in \sigma_d(-\Delta + V)} \abs{z}^\gamma \leq L_{\gamma, d} \norm{V_-}_{L^{\gamma + \frac{d}{2}}(\R^d)}^{\gamma + \frac{d}{2}}, 
\end{equation}        
where $\gamma \geq 1/2$ if $d=1,$ $\gamma>0$ if $d=2$ and $\gamma\geq 0$ if $d\geq 3$ (same conditions as in Theorem~\ref{thm:eigenvalue_bound_self}) (see~\cite{L_T} and~\cite{Weidl,Ro1,Ro2,Cw,L1} for the endpoint cases). 
In passing, observe that bounds on single eigenvalues, like~\eqref{SA_bound_Lamé}, represent a much weaker version of the Lieb-Thirring type inequalities~\eqref{classical_L-T}.

Now, some comment on inequalities~\eqref{SA_bound_Lamé} for $-\Delta + V$ (in fact on the stronger bound~\eqref{classical_L-T}) are listed below (we refer to~\cite{Li_Se}, Chapter 4, for further details).
 
\begin{remark}\label{neg_part}
Contrarily to the case of complex-valued potentials, here, as a consequence of the variational principles (no more available in the non self-adjoint context), only the negative part of $V,$ namely $V_-,$ plays a role. Of course, since $-\Delta$ is a non-negative operator, if $V$ is also non-negative then so is $-\Delta + V$ and therefore the variational characterization of the spectrum guarantees that no negative eigenvalues can occur. If $V$ changes its sign, that is if both the positive and negative part of $V=V_+ - V_-$ are non-trivial, it is true that both parts influence the negative eigenvalues, but as $-\Delta + V\geq -\Delta -V_{-},$ it is a consequence of the minimax principle that an upper estimate for the absolute value of negative eigenvalues of $-\Delta - V_-$ provides automatically the same upper estimate for the negative eigenvalues of the complete hamiltonian $-\Delta + V$ (actually, the same reasoning applies to the eigenvalue power sum). Indeed the effect of $V_+$ on negative eigenvalues is only to increase their size. 
\end{remark}

\begin{remark}
It is not difficult to see that if $z$ is an eigenvalue of $-\Delta + V$ with eigenfunction $\psi,$ then $\phi_\alpha(\cdot):= \psi(\alpha \, \cdot)$ is an eigenfunction of $-\Delta + V_\alpha(x)$ where $V_{\alpha}(\cdot)=\alpha^2 V(\alpha\, \cdot)$ with eigenvalue $\alpha^2 z.$ By a simple scaling argument, this gives that $p=\gamma + d/2$ is the only possible exponent for which an inequality of the following type
\begin{equation*}
	\sum_{z\in \sigma_d(-\Delta + V)}\abs{z}^\gamma \leq L_{\gamma, d} \norm{V_-}_{L^p(\R^d)}^p
\end{equation*}
can hold.
\end{remark}

\begin{remark}
	Let us underline that there are ``natural'' constraints on the validity of inequalities of type~\eqref{classical_L-T} that can be easily justified. We emphasize here the pathological behavior of dimensions $d=1,2.$  It is well known that, due to the lack of a Hardy-type inequality, the free hamiltonian $-\Delta$ is critical in low dimensions, which means that the addition of any arbitrarily small non-trivial negative potential $V$ makes the corresponding perturbed operator $-\Delta + V$ negative, thus ensuring existence of negative eigenvalues. On the other hand, if an inequality of the form~\eqref{classical_L-T} with $\gamma=0$ holds, then the left-hand side would turn into the counting function of negative eigenvalues and so, as a consequence of the aforementioned criticality, it is an integer greater or equal to one for any such potential. On the contrary, since the right-hand side can be made arbitrarily small, for instance, assuming $L_{0,d} \norm{V_-}_{L^{d/2}(\R^d)}^{d/2}<1,$ would give an evident contradiction.  
\end{remark}

\medskip
We can now turn to proof of Theorem~\ref{thm:eigenvalue_bound_self}. As we will show later, it will come as a consequence of the following lemma which provides dimension-dependent estimates for the expectation value $\langle \psi, V_- \psi\rangle:=\int_{\R^d} V_-\abs{\psi}^2\,dx$ of the potential energy $V_-$ in the state $\psi\in H^1(\R^d).$

\begin{lemma}\label{lemma:mean}
	Let $d\geq 1$ and let $V\in L^{\gamma + d/2}(\R^d)$ be a real-valued function. The following estimates for $\langle \psi, V_-\psi \rangle$ hold true.
	\begin{enumerate}[i)]
		\item If $d=1$ and $\gamma \geq \frac{1}{2},$ then
			\begin{equation}\label{mean_1d}
		\langle \psi, V_- \psi\rangle \leq \norm{V_-}_{\gamma + \frac{1}{2}}\norm{\psi}_2^{\frac{2(2\gamma -1)}{2\gamma+1}} \norm{\psi}_{\infty}^{\frac{4}{2\gamma+1}}.
			\end{equation}
		\item If $d=2$ and $\gamma > 0,$  then
			\begin{equation}\label{mean_2d}
				\langle \psi, V_- \psi\rangle \leq \norm{V_-}_{\gamma + 1} \norm{\psi}_{\frac{2(\gamma+1)}{\gamma}}^{2}. 
			\end{equation}
			\item If $d\geq 3$ and $\gamma \geq 0,$ then
				\begin{equation}\label{mean_geq_3d}
					\langle \psi, V_- \psi\rangle \leq \norm{V_-}_{\gamma + \frac{d}{2}}\norm{\psi}_2^{\frac{4\gamma}{2\gamma+d}} \norm{\psi}_{\frac{2d}{d-2}}^{\frac{2d}{2\gamma+d}}.
				\end{equation}
				\end{enumerate}
\end{lemma}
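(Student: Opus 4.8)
The strategy is entirely elementary: in each of the three cases I would first apply H\"older's inequality to peel off the factor $\norm{V_-}_{\gamma+d/2}$, and then, where necessary, interpolate the remaining Lebesgue norm of $\psi$ between two endpoint spaces.

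Concretely, set $p:=\gamma+\tfrac{d}{2}$ and let $p'$ be its conjugate exponent. Since $V_-\geq 0$, H\"older's inequality gives
\begin{equation*}
	\langle \psi, V_-\psi\rangle = \int_{\R^d} V_-\,\abs{\psi}^2\,dx \leq \norm{V_-}_{p}\,\norm{\abs{\psi}^2}_{p'} = \norm{V_-}_{\gamma+\frac{d}{2}}\,\norm{\psi}_{2p'}^{2}.
\end{equation*}
A direct computation gives $2p' = 2(\gamma+\tfrac d2)/(\gamma+\tfrac d2-1)$, that is $2p'=2(2\gamma+1)/(2\gamma-1)$ when $d=1$, $2p'=2(\gamma+1)/\gamma$ when $d=2$, and $2p'=2(2\gamma+d)/(2\gamma+d-2)$ when $d\geq 3$. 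In the two-dimensional case the displayed inequality is already precisely \eqref{mean_2d}, so no further step is required; this is exactly the reason why $d=2$ is the ``clean'' case.

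For $d=1$ and $d\geq 3$ the remaining task is to interpolate $\norm{\psi}_{2p'}$. When $d=1$, the hypothesis $\gamma\geq\tfrac12$ ensures $2\leq 2p'\leq\infty$, so the logarithmic convexity of the $L^q$-norms yields $\norm{\psi}_{2p'}\leq\norm{\psi}_2^{\theta}\norm{\psi}_\infty^{1-\theta}$ with $\theta$ determined by $1/(2p')=\theta/2$; inserting the value of $2p'$ gives $\theta=(2\gamma-1)/(2\gamma+1)$, and squaring produces \eqref{mean_1d}. When $d\geq 3$, the hypothesis $\gamma\geq 0$ guarantees $2\leq 2p'\leq 2d/(d-2)$, and interpolating between $L^2$ and the critical Sobolev exponent $L^{2d/(d-2)}$ gives $\norm{\psi}_{2p'}\leq\norm{\psi}_2^{\theta}\norm{\psi}_{2d/(d-2)}^{1-\theta}$ with $1/(2p')=\theta/2+(1-\theta)(d-2)/(2d)$; solving yields $\theta=2\gamma/(2\gamma+d)$, and squaring gives \eqref{mean_geq_3d}. (For $\psi\in H^1(\R^d)$, as in the application, the relevant endpoint norms are finite by the Sobolev embeddings $H^1(\R)\hookrightarrow L^\infty$, $H^1(\R^2)\hookrightarrow L^q$ for $q<\infty$, and $H^1(\R^d)\hookrightarrow L^{2d/(d-2)}$ for $d\geq 3$.)

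I do not expect any genuine obstacle in this lemma; the proof is essentially bookkeeping. The only two points needing a little care are: (i) checking, dimension by dimension, that the intermediate exponent $2p'$ really falls inside the interpolation window $[2,\infty]$ (for $d=1$) or $[2,2d/(d-2)]$ (for $d\geq 3$) — this is exactly where the lower bounds on $\gamma$ in the statement are used; and (ii) tracking the interpolation parameter $\theta$ so that the powers of $\norm{\psi}_2$ and of the endpoint norm come out with the precise exponents claimed. At the endpoints $\gamma=\tfrac12$ ($d=1$) and $\gamma=0$ ($d\geq 3$) one has $\theta=0$, the interpolation step disappears, and the bound follows from H\"older alone.
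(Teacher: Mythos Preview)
Your proposal is correct and follows essentially the same route as the paper: H\"older with exponent $p=\gamma+d/2$ to extract $\norm{V_-}_{\gamma+d/2}$, then interpolation of $\norm{\psi}_{2p'}$ between $L^2$ and the relevant Sobolev endpoint ($L^\infty$ for $d=1$, $L^{2d/(d-2)}$ for $d\geq 3$), with the $d=2$ case requiring no interpolation. Your tracking of the exponents and your remark on the endpoint cases $\gamma=1/2$ and $\gamma=0$ match the paper's argument exactly.
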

\begin{proof}
	Let us start with the proof of~\eqref{mean_geq_3d}.
	
	It is an easy consequence of H\"older inequality that
	\begin{equation*}
		\langle \psi, V_- \psi \rangle:= \int_{\R^d} V_- \abs{\psi}^2\, dx\leq \norm{V_-}_{\gamma + \frac{d}{2}} \norm{\psi}_{\frac{2(2\gamma + d)}{2\gamma + d-2}}^2.
	\end{equation*}
	
	Being $ 2\leq 2(2\gamma + d)/(2\gamma + d-2)\leq 2d/(d-2),$
				we can use the interpolation inequality to get
				\begin{equation*}
					\norm{\psi}_{\frac{2(2\gamma + d)}{2\gamma + d-2}}^2\leq \norm{\psi}_2^{\frac{4\gamma}{2\gamma + d}} \norm{\psi}_{\frac{2d}{d-2}}^\frac{2d}{2\gamma +d}.
				\end{equation*}
				Plugging the latter in the former gives~\eqref{mean_geq_3d}.
				
				Now let us consider $d=1,2.$ Estimate~\eqref{mean_2d} is immediate consequence of H\"older inequality and the same holds for the case $\gamma=1/2$ in $d=1.$ 
				Finally, the remaining case $\gamma>1/2$ follows, as in the three dimensional framework, from H\"older and interpolation inequality, using that $2\leq 2(2\gamma + 1)/(2\gamma-1)<\infty.$ This proves~\eqref{mean_1d} and concludes the proof of the lemma.
\end{proof}

In passing, observe that if $\psi\in H^1(\R^d),$ then the norms on the right hand side of~\eqref{mean_1d},~\eqref{mean_2d} and~\eqref{mean_geq_3d} are finite. This is a consequence of the Sobolev embeddings
\begin{equation}
\label{Sobolev_embeddings}
H^1(\R^d)\hookrightarrow L^q(\R^d)
\quad \text{where}\quad
	\begin{system}
		& q=\infty \quad &\text{if}\quad d=1,\\
		& 2\leq q<\infty \quad &\text{if}\quad d=2,\\
		& q=2d/(d-2) \quad &\text{if}\quad d\geq 3,
	\end{system}
\end{equation}
which are quantified by the inequalities contained in the following lemma (see~\cite{Li_Lo}, Chapter 8).
\begin{lemma}[Sobolev inequalities]\label{Sobolev_ineq}
Let $d\geq 1$ and let $\psi\in H^1(\R^d).$
	\begin{enumerate}[i)]
		\item If $d=1,$ then
			\begin{equation}\label{Sob1}
				\int_{\R} \abs*{\frac{d\psi}{dx}}^2\, dx \geq \norm{\psi}_2^{-2} \norm{\psi}_\infty^4.
			\end{equation}
		\item If $d=2,$ then
			\begin{equation}\label{Sob2}
				\int_{\R^2} \abs{\nabla \psi}^2\, dx\geq S_{2,q} \norm{\psi}_2^{-\frac{4}{q-2}} \norm{\psi}_q^\frac{2q}{q-2}, \qquad 2< q<\infty.
			\end{equation}
		\item If $d\geq 3,$ then
			\begin{equation}\label{Sob3}
				\int_{\R^d} \abs{\nabla \psi}^2\, dx\geq S_d \norm{\psi}_{\frac{2d}{d-2}}^2.
			\end{equation}
	\end{enumerate}
\end{lemma}

Now we are in position to prove Theorem~\ref{thm:eigenvalue_bound_self}.

\subsection{Proof of Theorem~\ref{thm:eigenvalue_bound_self}}
	In this setting, the variational characterization of the spectrum states that for any $u\in [H^1(\R^d)]^d,$
	\begin{equation*}
		\inf \sigma(-\Delta^\ast + V_-)= \inf_{\norm{u}_{[L^2(\R^d)]^d}=1} \langle u, (-\Delta^\ast -V_-)u \rangle.
	\end{equation*}
	Therefore, in order to get~\eqref{SA_bound_Lamé} it is sufficient to prove the following lower bound
	\begin{equation}\label{sufficient_bound}
		\langle u, (-\Delta^\ast - V_-) u \rangle \geq  - C_{\gamma, d,\lambda, \mu}^\frac{1}{\gamma} \Big( \int_{\R^d} V_-^{\gamma+ \frac{d}{2}}\, dx \Big)^{\frac{1}{\gamma}}
	\end{equation}
	for any $u\in [H^1(\R^d)]^d$ with $\norm{u}_{[L^2(\R^d)]^d}=1.$
	In order to estimate $\langle u, (-\Delta^\ast - V_-) u \rangle,$ we consider the Helmholtz decomposition $u=u_S + u_P$ of $u.$ It follows from the explicit expression~\eqref{simple_writing} of $-\Delta^\ast$ and from the $H^1$-orthogonality of $u_S$ and $u_P$ (see Subsection~\ref{Ssec:Helmholtz}) that 
		\begin{equation}\label{eq:preliminary_SA}
			\begin{split}
			\langle u, (-\Delta^\ast - V_-) u \rangle&=\langle u_S, -\mu\Delta u_S\rangle + \langle u_P, -(\lambda + 2\mu)\Delta u_P\rangle - \langle u, V_-u\rangle\\
			&=\mu \sum_{j=1}^d \int_{\R^d} \abs{\nabla u_S^{(j)}}^2\, dx + (\lambda + 2\mu) \sum_{j=1}^d \int_{\R^d} \abs{\nabla u_P^{(j)}}^2\, dx - \sum_{j=1}^d \langle u^{(j)}, V_-u^{(j)}\rangle.
			\end{split}
		\end{equation}
		Let us start considering the case $d\geq 3.$ 
	
	Using Sobolev inequality~\eqref{Sob3} on the $j$-th component of the vector-field $u_S$, one gets
	\begin{equation}\label{eq:bound_u_S^j}
		\begin{split}
		\sum_{j=1}^d \int_{\R^d} \abs{\nabla u_S^{(j)}}^2\, dx 
		&\geq S_d \sum_{j=1}^d \norm{u_S^{(j)}}_{\frac{2d}{d-2}}^2
		\\
		&\geq S_d \Big(\sum_{j=1}^d \norm{u_S^{(j)}}_{\frac{2d}{d-2}}^\frac{2d}{d-2}\Big)^\frac{d-2}{d}=: S_d \norm{u_S}_{\frac{2d}{d-2}}^2,
		\end{split}
	\end{equation}
	where in the last inequality we used the sub-additivity of the concave function $\abs{x}^\frac{d-2}{d}.$
	
	The same computation performed for $u_P$ gives
	\begin{equation}\label{eq:bound_u_P^j}
		\sum_{j=1}^d \int_{\R^d} \abs{\nabla u_P^{(j)}}^2\, dx\geq S_d \norm{u_P}_{\frac{2d}{d-2}}^2.
	\end{equation}
	
	Now we are in position to estimate $\langle u, V_- u \rangle.$ Using bound~\eqref{mean_geq_3d} in Lemma~\ref{lemma:mean}, two times the H\"older inequality for discrete measures and, finally, the Young inequality, $ab\leq a^p/p + b^q/q$ which holds for all positive $a,b$ and $1/p+1/q=1,$ we get for some $\varepsilon>0$
	\begin{equation}\label{eq:bound-V_-}
		\begin{split}
			\sum_{j=1}^d \langle u^{(j)}, V_- u^{(j)} \rangle
			&\leq \norm{V_-}_{\gamma + \frac{d}{2}} \sum_{j=1}^d \norm{u^{(j)}}_2^\frac{4\gamma}{2\gamma + d} \norm{u^{(j)}}_{\frac{2d}{d-2}}^\frac{2d}{2\gamma + d}
			\leq d^\frac{2}{2\gamma+d} \norm{V_-}_{\gamma + \frac{d}{2}}\norm{u}_2^{\frac{4\gamma}{2\gamma + d}} \norm{u}_{\frac{2d}{d-2}}^\frac{2d}{2\gamma + d}\\
			&\leq \frac{d^\frac{2}{2\gamma+d}}{\varepsilon^{1+ \frac{d}{2\gamma}}} \frac{2\gamma}{2\gamma + d} \norm{V_-}_{\gamma + \frac{d}{2}}^{1+ \frac{d}{2\gamma}} + \varepsilon^{1 + \frac{2\gamma}{d}} \frac{d^{1+\frac{2}{2\gamma+d}}}{2\gamma + d} \norm{u}_{\frac{2d}{d-2}}^2\\
			&\leq \frac{d^\frac{2}{2\gamma+d}}{\varepsilon^{1+ \frac{d}{2\gamma}}} \frac{2\gamma}{2\gamma + d} \norm{V_-}_{\gamma + \frac{d}{2}}^{1+ \frac{d}{2\gamma}} + 2 \varepsilon^{1+ \frac{2\gamma}{d}  } \frac{d^{1+\frac{2}{2\gamma+d}}}{2\gamma + d} \big[ \norm{u_S}_{\frac{2d}{d-2}}^2 + \norm{u_P}_{\frac{2d}{d-2}}^2\big], 
		\end{split}
	\end{equation}
	where in the last inequality we simply used the inequality $\norm{u}_{\frac{2d}{d-2}}^2\leq 2 \big[ \norm{u_S}_{\frac{2d}{d-2}}^2 + \norm{u_P}_{\frac{2d}{d-2}}^2 \big].$
	
	Now, plugging~\eqref{eq:bound_u_S^j},~\eqref{eq:bound_u_P^j} and~\eqref{eq:bound-V_-} in~\eqref{eq:preliminary_SA}, one has
	\begin{multline*}
		\langle u, (-\Delta^\ast -V_-)u \rangle\\ 
		\geq \Big(\min\{\mu, \lambda + 2\mu\} S_d - 2 \varepsilon^{1+ \frac{2\gamma}{d}} \frac{d^{1+ \frac{2}{2\gamma+d}}}{2\gamma + d} \Big) \big( \norm{u_S}_{\frac{2d}{d-2}}^2 + \norm{u_P}_{\frac{2d}{d-2}}^2\big)
		- \frac{d^\frac{2}{2\gamma+d}}{\varepsilon^{1 + \frac{d}{2\gamma}}} \frac{2\gamma}{2\gamma + d} \norm{V_-}_{\gamma + \frac{d}{2}}^{1 + \frac{d}{2\gamma}}. 
	\end{multline*}
	
	Choosing a suitable small $\varepsilon=: \varepsilon_{\gamma, d, \lambda, \mu},$ one gets~\eqref{sufficient_bound} with
	\begin{equation*}
		C_{\gamma, d, \lambda, \mu}^\frac{1}{\gamma}:= \frac{d^\frac{2}{2\gamma+d}}{\varepsilon_{\gamma, d, \lambda, \mu}^{1 + \frac{d}{2\gamma}}} \frac{2\gamma}{2\gamma + d}.
	\end{equation*}
	Hence, bound~\eqref{SA_bound_Lamé} is proved if $d\geq 3.$ We skip the proof of the analogous bounds in the lower dimensional cases, namely $d=1,2.$ Indeed these follow from the corresponding estimates in Lemma~\ref{lemma:mean} and Sobolev inequalities (Lemma~\ref{Sobolev_ineq}) with minor modifications from the reasoning above.

\section{Non self-adjoint setting: Proof of main results}
\label{Sec:Proofs}

This section is concerned with the proof of the eigenvalue bounds contained in Theorem~\ref{1_Lamé}-~\ref{KS_3_Lamé} and~\ref{4_Lamé}. As we will see, with the estimates of Theorem~\ref{thm:unif_resolvent_Lamé} in hand, the proofs will follow smoothly.

\subsection*{Proof of Theorem~\ref{1_Lamé}}
As already mentioned in the introduction, the starting point in our proofs is the Birman-Schwinger principle. In our context it states that if $z\in \C\setminus [0,\infty)$ is an eigenvalue of $-\Delta^\ast + V,$ then $-1$ is an eigenvalue of the Birman-Schwinger operator $V_{1/2} (-\Delta^\ast - z)^{-1} \abs{V}^{1/2}$ on $[L^2(\R^d)]^d.$ This implies that the operator norm of the latter is at least $1.$ Therefore, in order to get the bound~\eqref{bound_eigenvalues-Lamé}, we are reduced to prove  
		\begin{equation}\label{rephrase}
			\norm{V_\frac{1}{2} (-\Delta^\ast -z)^{-1} \abs{V}^\frac{1}{2}}_{L^2\to L^2}^{\gamma + \frac{d}{2}} \leq C_{\gamma, d, \lambda, \mu} \abs{z}^{-\gamma} \norm{V}_{L^{\gamma + \frac{d}{2}}}^{\gamma+ \frac{d}{2}}.
		\end{equation}
		The same strategy, with the needed modifications, then will be also applied to prove the corresponding bounds in Theorem~\ref{2_Lamé}, Theorem~\ref{KS_3_Lamé} and~\ref{4_Lamé}.
		
		Providing bounds for $\norm{V_\frac{1}{2} (-\Delta^\ast -z)^{-1} \abs{V}^\frac{1}{2}}_{L^2\to L^2}$ is estimating the quantity $\abs{\langle f, V_{1/2} (-\Delta^\ast - z)^{-1}\abs{V}^\frac{1}{2} g \rangle},$ for $f, g\in [L^2(\R^d)]^d.$ 
		To simplify the notation we introduce the function $G:= \abs{V}^\frac{1}{2} g.$ Using the H\"older inequality and estimate~\eqref{Lamé_res_1} for the resolvent $(-\Delta^\ast -z)^{-1},$ we have
		\begin{equation*}
			\begin{split}
			\abs{\langle f, V_\frac{1}{2} (-\Delta^\ast - z)^{-1} G\rangle}&\leq \norm{f \abs{V}^\frac{1}{2}}_p \norm{(-\Delta^\ast -z)^{-1} G}_{p'}\\
			&\leq C_{p,d,\lambda, \mu} \abs{z}^{- \frac{d+2}{2} + \frac{d}{p}} \norm{f \abs{V}^\frac{1}{2}}_p \norm{G}_p.
			\end{split}
		\end{equation*}
		Thus, recalling that $G= \abs{V}^{1/2} g,$ one has
		\begin{equation}\label{1_preliminary}
			\abs{\langle f, V_\frac{1}{2} (-\Delta^\ast - z)^{-1} \abs{V}^\frac{1}{2} g \rangle}
			\leq C_{p,d,\lambda, \mu} \abs{z}^{- \frac{d+2}{2} + \frac{d}{p}} \norm{f \abs{V}^\frac{1}{2}}_p \norm{\abs{V}^\frac{1}{2} g}_p.
		\end{equation}
		Using the H\"older inequality and its version for discrete measures and the sub-additivity property of the concave function $\abs{x}^p,$ for $0<p\leq 1,$  one gets
		\begin{equation*}
			\norm{f\, \abs{V}^\frac{1}{2}}_p= \Big(\sum_{j=1}^d \norm{f_j \abs{V}^\frac{1}{2}}_p^p \Big)^\frac{1}{p}\leq \Big( \sum_{j=1}^d \norm{f_j}_2^p \norm{V}_{\frac{p}{2-p}}^\frac{p}{2} \Big)^\frac{1}{p}\leq \norm{V}_{\frac{p}{2-p}}^\frac{1}{2} \sum_{j=1}^d \norm{f_j}_2 \leq d^\frac{1}{2} \norm{V}_{\frac{p}{2-p}}^\frac{1}{2}\norm{f}_2. 
		\end{equation*}
		The same estimate for the term involving $g$ gives
		\begin{equation*}
			\norm{\abs{V}^\frac{1}{2} g}_p\leq d^\frac{1}{2} \norm{V}_{\frac{p}{2-p}}^\frac{1}{2}\norm{g}_2. 
		\end{equation*}
		Plugging these two bounds in~\eqref{1_preliminary} we end up with the following inequality
		\begin{equation*}
			\abs{\langle f, V_\frac{1}{2} (-\Delta^\ast - z)^{-1} \abs{V}^\frac{1}{2} g \rangle}
			\leq C_{p,d,\lambda, \mu} \, d\, \abs{z}^{- \frac{d+2}{2} + \frac{d}{p}}\norm{V}_{\frac{p}{2-p}}\norm{f}_2\norm{g}_2.
		\end{equation*}
		Now, choosing $p=\frac{2(2\gamma + d)}{2\gamma + d + 2}$ (observe that the restriction on $\gamma$ in Theorem~\ref{1_Lamé} guarantees that $p$ satisfies the hypotheses in Theorem~\ref{thm:unif_resolvent_Lamé}) and taking the supremum over all $f, g\in [L^2(\R^d)]^d$ with norm less than or equal to one, we get~\eqref{rephrase}. This concludes the the proof of Theorem~\ref{1_Lamé}. 
\qedhere

\subsection*{Proof of Theorem~\ref{2_Lamé}}
	As in the proof of the previous result, we are reduced to prove the following bound
	\begin{equation*}
		\norm{V_\frac{1}{2} (-\Delta^\ast -z)^{-1} \abs{V}^\frac{1}{2}}_{L^2\to L^2}^{\gamma + \frac{d}{2}} \leq C_{\gamma, d, \lambda, \mu} \abs{z}^{-\gamma}\norm{V}_{\mathcal{L}^{\alpha,p}(\R^d)}^{\gamma + \frac{d}{2}}.
	\end{equation*}
	We pick a strictly positive function $\phi \in \mathcal{L}^{\alpha, p}$ and we define a strictly positive approximation of our potential, that is $V_\varepsilon(x):= \sup\{ \abs{V(x)}, \varepsilon \phi(x)\}.$ Using Cauchy-Schwarz inequality and estimate~\eqref{Lamé_res_2} for the resolvent $(-\Delta^\ast -z)^{-1},$ we have
	\begin{equation*}
		\begin{split}
		\abs{\langle f, V_\frac{1}{2} (-\Delta^\ast-z)^{-1} \abs{V}^\frac{1}{2} g \rangle}
		&\leq \norm{f \sqrt{\abs{V}/ V_\varepsilon}}_2 \norm{(-\Delta^\ast-z)^{-1} \abs{V}^\frac{1}{2} g}_{L^2(V_\varepsilon)}\\
		&\leq C_{\alpha, d, \lambda, \mu} \abs{z}^{-1 + \frac{\alpha}{2}} \norm{V_\varepsilon}_{\mathcal{L}^{\alpha, p}}\norm{f \sqrt{\abs{V}/V_\varepsilon}}_2\, \norm{g\sqrt{\abs{V}/V_\varepsilon}}_2\\
		&\leq C_{\alpha, d, \lambda, \mu} \abs{z}^{-1+ \frac{\alpha}{2}} \norm{V_\varepsilon}_{\mathcal{L}^{\alpha,p}} \norm{f}_2\, \norm{g}_2. 
		\end{split}
	\end{equation*}
	Thus, the theorem is proved once $\varepsilon$ goes to zero, taking the supremum over all $f, g \in [L^2(\R^d)]^d$ with norm less than or equal to one and by choosing $\alpha= \frac{2d}{2\gamma +d}.$
\qedhere

\subsection*{Proof of Theorem~\ref{KS_3_Lamé}}
	Again we are reduced to prove the following bound
	\begin{equation*}
		\norm{V_\frac{1}{2} (-\Delta^\ast -z)^{-1} \abs{V}^\frac{1}{2}}_{L^2\to L^2}^{\gamma + \frac{d}{2}} \leq C_{\gamma, d, \lambda, \mu} \abs{z}^{-\gamma} \norm{V^\beta}_{\mathcal{KS}_{\alpha}}^{\frac{1}{\beta}(\gamma +\frac{d}{2})}.
	\end{equation*}
	
	The same strategy used above, with the usage of~\eqref{Lamé_res_3} instead of~\eqref{Lamé_res_2}, gives
	\begin{equation*}
		\begin{split}
		\abs{\langle f, V_\frac{1}{2} (-\Delta^\ast-z)^{-1} \abs{V}^\frac{1}{2} g \rangle}
		&\leq \norm{f \sqrt{\abs{V}/ V_\varepsilon}}_2 \norm{(-\Delta^\ast-z)^{-1} \abs{V}^\frac{1}{2} g}_{L^2(V_\varepsilon)}\\
		&\leq C_{\alpha, p, d, \lambda, \mu} \abs{z}^{-\frac{\alpha -d+1}{2\alpha - d+1}} \norm{V_\varepsilon^\beta}_{\mathcal{KS}_{\alpha}}^\frac{1}{\beta} \norm{f \sqrt{\abs{V}/V_\varepsilon}}_2\, \norm{g\sqrt{\abs{V}/V_\varepsilon}}_2\\
		&\leq C_{\alpha, p, d, \lambda, \mu} \abs{z}^{-\frac{\alpha -d+1}{2\alpha - d+1}} \norm{V_\varepsilon^\beta}_{\mathcal{KS}_{\alpha}}^\frac{1}{\beta} \norm{f}_2\, \norm{g}_2. 
		\end{split}
	\end{equation*}
	Thus, the theorem is proved once $\varepsilon$ goes to zero, taking the supremum over all $f, g \in [L^2(\R^d)]^d$ with norm less than or equal to one and by choosing $\alpha= \frac{d(d-1)}{d-2\gamma}.$
\qedhere

\subsection*{Proof of Theorem~\ref{4_Lamé}}
	As before, our problem is reduced to proving the following bound
	\begin{equation}\label{last}
		\norm{V_\frac{1}{2} (-\Delta^\ast -z)^{-1} \abs{V}^\frac{1}{2}}_{L^2\to L^2}^q \leq C_{\gamma, \alpha, d, \lambda, \mu} \abs{z}^{-\gamma} \norm{V}_{L^q(\langle x \rangle^{2\alpha}\, dx)}^q,
	\end{equation}
	with $q=2\gamma + (d-1)/2.$
	First, observe that from~\eqref{Lamé_res_1}, with the choice $p=2(d+1)/(d+3)$ one has
	\begin{equation}\label{interp_1}
		\norm{(-\Delta^\ast-z)^{-1}}_{L^p\to L^{p'}}\leq C_{d,\lambda, \mu} \abs{z}^{-\frac{1}{d+1}}.
	\end{equation}
	In passing, notice that~\eqref{bound_eigenvalues-Lamé} was obtained with the usage of~\eqref{res_1} with the choice $p=2(2\gamma + d)/(2\gamma + d+2)$ (see last part of Proof of Theorem~\ref{1_Lamé}). Then $p=2(d+1)/(d+3)$ corresponds to the case $\gamma=1/2$ in~\eqref{bound_eigenvalues-Lamé} which gave the aforementioned decay threshold $2d/(d+1)$. 
	
	We know from~\eqref{Lamé_st_scattering} that
	\begin{equation}\label{interp_2}
		\norm{(-\Delta^\ast-z)^{-1}}_{L^2(\langle x \rangle^{2\alpha}) \to L^2(\langle x \rangle^{-2\alpha})}\leq C_{\alpha, d, \lambda, \mu} \abs{z}^{-\frac{1}{2}}, \qquad \alpha>\frac{1}{2}.
	\end{equation}
		Using Riesz-Thorin interpolation between estimate~\eqref{interp_1} and~\eqref{interp_2}, we get
	\begin{equation*}
		\norm{(-\Delta^\ast-z)^{-1}}_{L^{p_\theta}(\langle x \rangle^{\alpha \theta p_\theta}) \to L^{p_\theta'}(\langle x \rangle^{-\alpha \theta p_\theta'})} \leq C_{\alpha, d, \lambda, \mu}\abs{z}^{-\frac{1-\theta}{d+1} - \frac{\theta}{2}}, \qquad \frac{1}{p_\theta}= \frac{1-\theta}{p} + \frac{\theta}{2},
	\end{equation*}
	with $p=2(d+1)/(d+3),$ $\alpha>1/2$ and $1/p_\theta + 1/p_\theta'=1.$
	
	From this fact it is easy to get
	\begin{equation*}
		\begin{split}
			\abs{\langle f, V_\frac{1}{2} (-\Delta^\ast - z)^{-1} \abs{V}^\frac{1}{2} g\rangle}
			&\leq \norm{f \abs{V}^\frac{1}{2} \langle x \rangle^{\alpha \theta}}_{{p_\theta}} \norm{(-\Delta^\ast-z)^{-1} \abs{V}^\frac{1}{2} g}_{L^{p_\theta'}(\langle x \rangle^{-\alpha \theta p_\theta'})}\\
			&\leq C_{\alpha, d, \lambda, \mu} \abs{z}^{-\frac{1-\theta}{d+1} - \frac{\theta}{2}} \norm{f \abs{V}^\frac{1}{2} \langle x \rangle^{\alpha \theta}}_{{p_\theta}} \norm{ \langle x \rangle^{\alpha \theta}\abs{V}^\frac{1}{2} g}_{{p_\theta}}\\
			& \leq C_{\alpha, d, \lambda, \mu}\, d \abs{z}^{-\frac{1-\theta}{d+1} - \frac{\theta}{2}} \norm{f}_2 \norm{g}_2 \norm{\abs{V} \langle x \rangle^{2\alpha \theta}}_{\frac{p_\theta}{2-p_\theta}}.
		\end{split}
	\end{equation*}
	
	Taking the supremum over all $f,g\in [L^2(\R^d)]^d$ with norm less than or equal to one and raising the resulting inequality to the power $p_\theta/(2-p_\theta)$ gives
	\begin{equation*}
		\norm{V_\frac{1}{2} (-\Delta^\ast -z)^{-1} \abs{V}^\frac{1}{2}}_{L^2 \to L^2}^\frac{p_\theta}{2-p_\theta} \leq C_{\alpha, d, \lambda, \mu} \abs{z}^{-\big(\frac{1-\theta}{d+1} + \frac{\theta}{2}\big) \tfrac{p_\theta}{2-p_\theta}} \norm{\abs{V} \langle x \rangle^{2\alpha \theta}}_{\frac{p_\theta}{2-p_\theta}}^\frac{p_\theta}{2-p_\theta}.
	\end{equation*}
	Here we abuse the notation by using the same symbol $C_{\alpha, d, \lambda, \mu}$ for different constants.
	
	Calling 
	\begin{equation*}
		\gamma:=\left(\frac{1-\theta}{d+1}+\frac{\theta}{2}\right) \frac{p_\theta}{2-p_\theta},
	\end{equation*}
	this clearly gives $\frac{p_\theta}{2-p_\theta}= 2\gamma\, \frac{d+1}{2-\theta + d\theta},$ since we also have $\frac{1}{p_\theta}= \frac{1-\theta}{p} + \frac{\theta}{2},$ with  $p= \frac{2(d+1)}{d+3},$ this leads to the constraint $\theta =1-\frac{d+1}{4\gamma + d-1}.$ With these choices one has
\begin{equation*}
	\norm{V_\frac{1}{2} (-\Delta^\ast -z)^{-1} \abs{V}^\frac{1}{2}}_{L^2\to L^2}^q \leq C_{\gamma, \alpha, d, \lambda, \mu} \abs{z}^{-\gamma} \norm{V}_{L^q(\langle x \rangle^{2\alpha(\gamma-1)}\, dx)}^q,
\end{equation*}	
with $q=2\gamma + (d-1)/2,$ which is the bound~\eqref{last} once renaming $\alpha(2\gamma -1)=\alpha.$ 
\qedhere



\end{document}